\definecolor{Red}{cmyk}{0,1,1,0}
\definecolor{Blue}{cmyk}{1,1,0,0}
\theoremstyle{plain}
\newtheorem{theorem}{Theorem}[section]
\newtheorem{corollary}[theorem]{Corollary}
\newtheorem{proposition}[theorem]{Proposition}
\newtheorem{lemma}[theorem]{Lemma}
\newtheorem*{theorem-main}{Main Theorem}
\theoremstyle{definition}
\newtheorem{definition}[theorem]{Definition}
\newtheorem{remark}[theorem]{Remark}
\newtheorem{example}[theorem]{Example}
\newtheorem*{condicao*}{Decay condition}
\title[General Topological Markov chains]
{Some Probabilistic Properties of General Topological Markov Chains }
\author[E. A. Silva]{E. A. Silva}
\address{Departamento de Matemática, Universidade de Bras\'ilia, 70910-900, Bras\'ilia, Brazil}
\email{e.a.silva@mat.unb.br} 
\author[E. G. Mesquita]{E. G. Mesquita}
\address{Departamento de Matemática, Universidade Federal do Tocantins, 77330-000, \break \phantom{Ari}\!Arraias, Brazil}
\email{elisgardel@uft.edu.br} 
\author[E. Matias]{E. Matias}
\address{Departamento de Matemática, Universidade Federal da Bahia, 70910-900, Salvador, Brazil}
\email{edgar.matias@ufba.br} 
\subjclass[2010]{37D35, 28Dxx}
\keywords{Thermodynamic Formalism, Topological Markov chains, Perron-Frobenius-Ruelle Theorem}
\date{}
\begin{document}

\maketitle
	

\begin{abstract}
 In this paper, we utilize the framework of Markov processes to attain a more probabilistic perspective on the theory of transfer operators. In doing so, we establish a functional central limit theorem (FLCT) for an $O(N)$ model associated with Dyson potential on the one-dimensional lattice. We also proof  a FLCT  on   a non-compact alphabet setting,  for a model associated with a Dyson type potential on the one-dimensional lattice. A Breimann ergodic theorem for equilibrium measures arising from the Ruelle Operator Formalism is proved. Furthermore, we obtain a qualitative criterion (strong transitivity) to determine when the conformal measure has full support. As an application, we show how to connect the Ruelle operator framework with the perspective of the {\it Hopf theory of Markov Processes.}
\end{abstract}

\section{introduction}

The classical Thermodynamic Formalism has its start in the seminal paper of David Ruelle \cite{MR0234697}
on the one-dimensional lattice gas model  and further developed for subshifts of finite type, i.e, some suitable subsets of  $\Sigma=\{1, \ldots,k\}$.

Since then, the theory has been widely developed in several ways, particularly in considering alphabets beyond the finite ones.  Restricting ourselves to works that consider  countable alphabets, we can cite for  instance, \cite{MR1818392,Sarig:2013, Sarig:2001a,MR1853808,  MR1955261}, where a  Thermodynamic Formalism theory is developed for countable alphabets. 

From the viewpoint of Statistical Mechanics it is also of interest  to consider shift spaces being more general spaces, such as  products of compact metric alphabets or even  standard Borel spaces.  A Thermodynamic Formalism for these types of alphabets has been extensively developed, we cite  for example  \cite{MR2864625,MR3377291,MR3538412,CSS19,MR4246976,MR4344719}. In \cite{MR2864625} a Ruelle operator formalism was developed for the alphabet $E=\mathbb{S}^1$, extending the classical theory to the $XY$ model and in a subsequent work \cite{MR3377291}   extended for general compact metric alphabets. In the work  \cite{MR4246976} the transfer operator framework is used to describe an  $O(N)$ model (see \cite[Chapter 9]{MR2807681}).   The relation between the Gibbs measures on the one-dimensional lattice  obtained through Ruelle operators and Gibbs measures in the Dobrushin-Lanford-Ruelle (DLR) sense is established  in \cite{MR4112024} for compact alphabets (in the lattice $\mathbb{Z}$ this equivalence is false, see \cite{Rodrigo}).

More recently in \cite{CSS19} a thermodynamic formalism for alphabets being  standard Borel spaces (non necessarily compact) is  developed. Additionally, it is worth mentioning the work \cite{MR4344719} where  a highly non trivial application of  transfer operators framework  in general state space  is performed to obtain  fully supported ergodic  measures for a class of Linear Dynamics in Banach spaces(whose  fibers are not compact).

%

Roughly speaking, one of the principal aspects behind the transfer operator method is based on utilizing functional analysis techniques (such as Kato's perturbation theory, Tychonov-Schauder theorem, Brower fixed point theorem, Arzelà-Ascoli theorem, etc.) to establish statistical properties of Markov processes that arise from the theories of dynamical systems and statistical mechanics, in fact see \cite{MR0234697, HennionHerve:2001, hopf, Nagaev} for classical references and \cite{Baladi, MR2478676, Kloeffec, CalcuFT, Barrientos2022-cr} for more recently ones.
Our  goal in this paper is to take the opposite approach, using the framework of Markov processes to gain a more probabilistic perspective on the theory of transfer operators. In doing so, we aim to establish a more natural connection with the literature of Stochastic Processes.

 In this work, we will continue to focus on general alphabets, particularly the non-compact ones.
Motivated by the framework introduced by  \cite{kloeckner_2020} we will express the Ruelle  operator as an integral against a probability kernel.
A bit more precisely, if $T:X\to X$ measurable map  and $\mathsf{M}=(m_x)_{x\in X}$  is   the transition probability kernel  arising  from a walk on the pre-images $T^{-1}(x)$, given a  bounded continuous potential $f$ we define the \emph{transfer
	operator} of $\mathsf{M}$ with respect to $f$ by
\begin{equation}
\mathscr{L}_{\mathsf{M},f}\varphi(x)=\int_Xe^{f(y)}\varphi(y)\; dm_x(y).
\end{equation}

We find this description of transfer operators (especially those originating from dynamical systems theory) to be  very auspicious and insightful.

\subsection*{A brief description of the paper organization and main results.}

\subsubsection*{Section 2}
In this section, we present the fundamental concepts and framework of the paper.

\subsubsection*{Section 3}
In Section 3  the notion of {\it irreducible Feller process} is introduced. We show that every {\it conformal measure} (see \cite{MR1014246})  arising from a  irreducible Feller process is fully supported. By relating this notion of irreducibility  with a strong form of transitivity  we show  that every conformal measure $\nu$ of a {\it hypercyclic backward  weighted shift} is fully supported. As an outcome we are able to  extend the Ruelle operator  for  the backward weighted shift (introduced in \cite{MR4344719}) to $L^p(\nu)$. This is a new result and it opens up possibilities for the study of Linear Dynamics  from the point of view  of the {\it Hopf theory of Markov Processes}, see \cite{hopf,foguel} and  \cite[Section 2]{MR4246976}.


\subsubsection*{Section 4}
In this section, as an immediate consequence of viewing the transfer operator as a transition probability kernel we get, using a Furstenbeg's result  (see \cite{MR163345}),  the validity of a Breimann Ergodic theorem. By carefully analysis of  Furstenberg's proof, we see that his result works even in presence  of more than one stationary measures. 

\subsubsection*{Section 5}
This section is devoted to proof a Functional Central Limit theorem  for  some long-range statistical mechanics   models
 on the one-dimensional lattice. 
 
 Most commonly in the transfer operator scenario people when want to prove a CLT falls back on Nagaev-Guivarc'h method. Nevertheless   that method pass by to find a suitable subspace where the transfer operator restricted to it will be a  {\it quasi-compact}, which is in general the difficult part.

 Here we will follow the  method presented in \cite{BhLe}, where  presence of  {\it spectral gap} is not required. It  consists in to find  a solution of  the {\it Poisson equation}, 
 \[
(I-P)\upsilon=\phi.
\]
Above $P$ represents the extension of the Ruelle operator to $L^2(\mu)$, and $\mu$ is the equilibrium measure (fixed point of the transfer operator) of the model in consideration.

 In Section \ref{O(N)-model} we  consider the $O(N)$ model, $N\geqslant2,$ whose the fibers are the compact space $E=\mathbb{S}^{N-1}$, associated to a long-range {\it Dyson type potential},
\[
f(x) = \sum_{n=1}^{\infty} \dfrac{\langle x_1,x_{n+1}\rangle}{n^{2+\varepsilon}},
\] 
where $\langle x_1,x_{n+1}\rangle$ means the inner product of $\mathbb{R}^n$. The approach that we use is close to the presented in  \cite[Section 4]{MR4246976}.  By using Theorem 4.1 of \cite{kloeckner_2020} we show, in a suitable space (potentials with logarithmic modulus of continuity),  a polynomial decay of correlation for this model. We use that feature to construct a solution of the Poisson equation and ensure a validity of a FCLT to this model. 

To achieve this task we need to extend some notions and results of \cite[Section 5.1]{kloeckner_2020}, to contemplate uncountable alphabets, see Section \ref{Decay-Rate-Uncout}.

In Section \ref{CLT-NON-COMP} we extend (under some  boundedness condition) the main result of  \cite{kloeckner_2020} to the { non-compact setting.} This constitutes a non-trivial extension of Theorem 3.1 of \cite{CSS19}. Utilizing the aforementioned generalization in the non-compact setting, we employ the same strategy mentioned earlier to establish a Functional Central Limit Theorem (FCLT) for an adapted Dyson-type model.

\section{Preliminaries}\label{sec-preliminares}

A measurable space $(X,\mathscr{X})$ is a standard  Borel space 
if there exists a metric $d_X$ such that $(X,d_X)$ is a complete
separable metric space and $\mathscr{X}$ is the 
Borel sigma-algebra induced by $d_{X}$. 
An important example is the product space $X=E^\mathbb{N}$ 
 regarded as a
metric space with the  metric 
\[
d_{X}(x,y)
=
\sum_{n=1}^{\infty} \frac{1}{2^n}\min\{d_{E}(x_n,y_n),1\},
\]
where $E$ can be taken as either  
a finite set $\{1,\ldots, d\}$, the set of positive integers $\mathbb{N}$, a 
compact metric space on the  Euclidean space $\mathbb{R}^d$, or more generally a standard Borel space.
Other important example are 
$X=\ell^p({ \mathbb{N}})=\{(x_n)_{n\in \mathbb{N}}\in \mathbb{R}^{\mathbb{N}}: \sum_{n=1}^\infty |x_n|^p<\infty\}$, $1\leqslant p<\infty$  
equipped with the norm 
\[
\|(x_n)_{n\in \mathbb{N}}\|_p=\left(\sum_{n=1}^\infty |x_n|^p\right)^{1/p}.
\]

From now on $X$ denotes a standard Borel space and 
$T:X\to X$ a measurable transformation.
We denote by $\mathscr{P}(X)$ the set of probability measures on the Borel sigma-algebra $\mathscr{B}(X)$  endowed
with the weak-$*$-topology. Given $\mu\in \mathscr{P}(X)$ and a bounded Borel-measurable function
$f:X\to \mathbb{R}$, we denote by  either
$\int_X f \,d\mu$ or $\mu(f)$  the Lebesgue integral of $f$, with respect to $\mu$. 

We denote by $\mathcal{L}^p(\mu)$, $1\leqslant p<\infty$,  the space of all measurable functions such that, $\int |f|^p\, d\mu<\infty$, and by $L(\mu)=\mathcal{L}^p(\mu)/\sim$, the quotient space where $\sim$ is the equivalence relation 
defined by $f\sim g$ iff $\int|f|^p\, d\mu= \int|g|\, d\mu$, endowed  with its standard norms.
   
\subsection{Generalized H\"older spaces}

Recall that a \emph{modulus of continuity} is a continuous, increasing, concave function $\omega:[0, \infty)\to [0, \infty)$
such that $\omega(0)=0$.  We say that a function $f:X\to \mathbb{R}$ is $\omega$-H\"older continuous function 
if for some positive constant $C$, we have
\[
|f(x)-f(y)|\leqslant C\omega(d(x,y)) \quad \text{for all } x,y\in X.
\]
If $f$ is a $\omega$-H\"older continuous function,  we define 
\[ \text{Hol}_\omega(f)=\sup_{x\neq y}\frac{|f(x)-f(y)|}{\omega(d(x,y))}.\] 
We refer to $C_{b}(X,\mathbb{R})$ as the Banach space of all 
real-valued bounded continuous functions endowed with its  supremum
norm $\|f\|_\infty=\sup_{x\in X}|f(x)|$.
We use the notation $C^{\omega}(X,\mathbb{R})$
to denote the space of all bounded real-valued $\omega$-H\"older functions on $X$, endowed with the norm
\[
\|f\|_\omega=\|f\|_\infty+\text{Hol}_\omega(f).
\]
It is easy to see that the normed  space $(C^\omega(X,\mathbb{R}),\|\cdot\|_{\omega})$ is a Banach algebra, that is, 
it is a Banach space and the following inequality holds $\|fg\|_\omega\leq \|f\|_\omega\|g\|_\omega$, 
for all $f,g \in C^\omega(X,\mathbb{R})$.

Two particularly important examples of moduli of continuity are for, $0<\alpha\leqslant 1$ and $\varepsilon>0$,  $\omega(t)=t^{\alpha},$ and, $\omega_{\varepsilon}(t)= \log(t_0/t)^{-\varepsilon}$ for $t_0$ sufficiently large and $t\in [0,1]$. We denote by $C^{\varepsilon \log}(X, \mathbb{R})$ the space of all bounded continuous potentials having $\omega_{\varepsilon}$ as modulus of continuity.

\subsection{ Markov chains on general state space framework }

A transition probability kernel on a standard Borel space $(X, \mathscr{X})$ means a family $\mathsf{M}=(m_x)_{x\in X}$ of probability measures such that for each $A\in\mathscr{B}(X)$ the function $x\mapsto m_x( A)\equiv m(x,A)$ is a measurable function.

Let $\mu \in \mathscr{P}(X)$ be an initial probability and   $\mathsf{M}=(m_x)_{x\in X}$ a transition probability kernel. A time homogeneous stochastic process $\Phi\equiv \{\Phi_0,\Phi_1,\ldots\}$ on a probability  space $(X,\mathscr{X},\mathbb{P}_{\mu})$,  whose  its finite dimensional distributions satisfy,
for each $n\geq 1$,
\begin{multline}\label{Markov-Chain}
\mathbb{P}_{\mu}(\Phi_0\in A_{0},\ldots,\Phi_n\in A_{n})
\\=
\int_{A_0}\ldots\int_{A_{n-1}}
m(y_{n-1},A_n)dm(y_{n-2},y_{n-1})\ldots dm(y_0,y_1)d\mu(y_0), 
\end{multline} 
is called a {\it Markov chain} with initial distribution $\mu$ and transition kernel $\mathsf{M}=(m_x)_{x\in X}.$  
If $\mu=\delta_x$ for some $x\in X$ we say that $\Phi$ is a Markov Chain {\it starting at} $x$.

Given an initial probability measure  $\mu \in \mathscr{P}(X)$ and   $\mathsf{M}=(m_x)_{x\in X}$ a transition probability kernel we can always construct  on $(X^{\mathbb{N}},\mathscr{B}(X^{\mathbb{N}}))$  a probability measure $\mathbb{P}_{\mu}$ and a stochastic process $\Phi$ whose its finite dimensional distributions satisfy (\ref{Markov-Chain}), see \cite[Theorem 3.4.1]{MR2509253}.

The transition probability  kernel $\mathsf{M}=(m_x)_{x\in X}$ is said to have the {\it Feller property}   when induces for every $f\in C_b(X,\mathbb{R})$ a bounded continuous map defined by 
\[
x\mapsto Pf(x)=\int f(y)m_x(dy).
\]
 It is clearly that  the correspondence $f\mapsto Pf$ induces a map $P:C_b(X,\mathbb{R})\to C_b(X,\mathbb{R})$, called the {\it Feller operator} associated to $\mathsf{M}=(m_x)_{x\in X}.$

The {\it Markov operator} induced by $m(x, A)$ is the map $\mathscr{T}:\mathscr{P}(X)\to \mathscr{P}(X)$ defined  by, 
\[
\mathscr{T}\mu(A)=\int_X m_x(A)\, d\mu(x).
\]
It is a straightforward calculation to show that $\mathscr{T}$ must satisfy the duality relation
\[
\int_X f(x)\,d \mathscr{T}\mu(x)=\int_X Pf(x)d\mu(x),
\]
for every $f\in C_b(X)$.
A  probability measure $\mu \in \mathscr{P}(X)$  satisfying, $\mathscr{T}\mu=\mu$, i.e, 
\[
\mathscr{T}\mu(A)=\int_X m_x(A)d\mu(x)=\mu(A), \quad \forall A\in \mathscr{X}, 
\]
is called a {\it stationary measure.}

\subsection{General transfer operator and its basic properties}\label{introduzindo-operador}

As we already mention, in this paper we use the framework of \cite{kloeckner_2020}, which we describe below.

A transition probability  kernel $\mathsf{M}=(m_x)_{x\in X}$ is said to be a \emph{backward walk} of a measurable map $T:X\to X$ if for all $x\in X$ the measure $m_x$ is fully supported on 
$T^{-1}(x)$. Given a potential $f\in C_b(X,\mathbb{R})$ we define the \emph{transfer
operator} of $\mathsf{M}$ with respect to $f$ by
\begin{equation}
\mathscr{L}_{\mathsf{M},f}\varphi(x)=\int_Xe^{f(y)}\varphi(y)\; dm_x(y).
\end{equation}

For each $n\in\mathbb{N}$ and $x\in X$ we denote by $m_x^n$ the measure on $X^n$ which 
is the law of a Markov chain $\Phi=(\Phi_n)$ starting at $\Phi_0=x$ 
and following the transition probability  kernel $\mathsf{M}$. 
In other words, denoting by $\bar x=(x_1,\dots,x_n)$ points of $X^n$, $m_x^n$ is defined by
its action on bounded continuous functions $F:X^n\to\mathbb{R}$ as follows
\[
\int_{X^n} F(\bar x)\,  d m_x^n(\bar x) 
= \int_{X}\cdots\int_{X} F(\bar x)\, d m_{x_{n-1}}(x_n) d m_{x_{n-2}}(x_{n-1}) \cdots  dm_x(x_1).
\]
For the sake of simplicity, we will denote 
\[
(m_x^n)_{x\in X} = \underbrace{(m_x)_{x\in X} \circ (m_x)_{x\in X} \circ \dots \circ (m_x)_{x\in X}}_{n\mbox{ - times}} \quad\text{or}\quad \mathsf{M}^n = \mathsf{M} \circ\dots\circ \mathsf{M}.
\]

 A straightforward computation shows that $\mathscr{L}_{\mathsf{M},f}^n$ can be expressed in terms of Birkhoff sums as, 
\[\mathscr{L}_{\mathsf{M},f}^n \varphi(x) = \int_{X^n} e^{f^n(\bar x)} \varphi(x_n)\,d m^n_x(\bar x)\]
where
$f^n: X^n \to \mathbb{R}$ is the map defined by 
 $\bar x=(x_1,\dots,x_n)\longmapsto f(x_1)+\dots + f(x_n)$.

Denote by $\varrho$ the spectral radius of the transfer operator $\mathscr{L}_{\mathsf{M},f}.$  Following Denker and Urbanski \cite{MR1014246}, we call a  
probability  measure $\nu \in \mathscr{P}(X)$ belonging to the set 
\[
\mathscr{G}^*(\varrho)=\{\nu \in \mathscr{P}(X) : \mathscr{L}_{\mathsf{M},f}^*\nu=\varrho \nu\}
\]
   a {\it $\varrho$-conformal measure}, or simply a conformal measure. It is a well known result that when $X$ is a compact and $T$ is continuous the set $\mathscr{G}^*$ is  nonempty. We will see in next section two examples where  $X$ is non-compact and $\mathscr{G}^*$ still nonempty.


\subsection{A  natural Markov chain}

We will be particularly interested in potentials that satisfy $\mathscr{L}_{\mathsf{M},f}\mathbf{1}=\mathbf{1}$;  these potentials are called {\it normalized}. This notion can be stated in a different manner, by saying that $e^{f}dm_x$  is a probability measure for every $x$.

Let $\mathsf{M}=(m_x)_{x\in X}$ be a backward walk for a dynamical system $T$.  When $f$ is normalized an other natural stochastic process associated to the transfer operator $\mathscr{L}_{\mathsf{M},f}$ is the Markov chain  $(\Psi_n)$ having $\widetilde{m}_x(A)=\mathscr{L}_{\mathsf{M},f}(\mathbf{1}_A)(x)$, $A\in \mathscr{X}$ and $x\in X$, as a transition kernel.

Below,  the expectation with respect to the joint law of the  Markov
Chain $(\Psi_n)_{n\in\mathbb{N}}$ with stationary measure
$\mu$ will be denoted as usual by $\mathbb{E}_{\mu}$.
The distributional relation between the Markov chain $(\Psi_n)_{n\in\mathbb{N}}$
and the underlying dynamics is given
by the following lemma, whose the proof can be found in reference \cite[p.85]{HennionHerve:2001}.
\begin{lemma}\label{lema-eq-distribution}
	Let $(\Psi_n)_{n\in\mathbb{N}}$ be the Markov chain with transition kernel  $\widetilde{\mathsf{M}}=(\widetilde{m}_x)_{x\in X}$ defined above,
	$n\geqslant 1$, $g:X^{n}\to \mathbb{R}$ a positive measurable function and $\mu$ the stationary measure.
	Then we have
	\[
	\int_X g(x,T(x), \ldots, T^{n-1}(x))\,d\mu(x)=\mathbb{E}_\mu[g(\Psi_n, \Psi_{n-1},\ldots,\Psi_1 )].
	\]
\end{lemma}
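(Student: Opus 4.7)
The plan is to exploit the fact that the transition kernel $\widetilde{\mathsf{M}}$ inherits from $\mathsf{M}$ the geometric property that its fiber measures concentrate on $T^{-1}(x)$. This yields the pathwise relation $T(\Psi_{k+1}) = \Psi_k$, which, combined with stationarity of $\mu$, converts the Markov-chain expectation into the desired Birkhoff-type integral.

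First I would verify that $\widetilde{m}_x$ is supported on $T^{-1}(x)$. Because $\mathsf{M}$ is a backward walk for $T$, we have $m_x(T^{-1}(x)) = 1$, and from $\widetilde{m}_x(A) = \int_X e^{f(y)} \mathbf{1}_A(y) \, dm_x(y)$ the normalization $\mathscr{L}_{\mathsf{M},f}\mathbf{1} = \mathbf{1}$ yields $\widetilde{m}_x(T^{-1}(x)) = 1$. Using the finite-dimensional distributions in \eqref{Markov-Chain} with $\widetilde{m}_x$ in place of $m_x$, a routine Fubini argument shows that the event $\{T(\Psi_{k+1}) = \Psi_k\}$ has probability one under $\mathbb{P}_\mu$ for each $k \geq 0$. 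Iterating from $k = 0$ to $k = n-1$ produces the almost-sure identity
$$\bigl(\Psi_n, \Psi_{n-1}, \ldots, \Psi_1\bigr) = \bigl(\Psi_n, T(\Psi_n), T^2(\Psi_n), \ldots, T^{n-1}(\Psi_n)\bigr).$$

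Next, since $\mu$ is stationary for $(\Psi_n)$, the marginal law of $\Psi_n$ under $\mathbb{P}_\mu$ equals $\mu$ for every $n$. Substituting the pathwise identity and applying the transfer theorem, I obtain
$$\mathbb{E}_\mu\bigl[g(\Psi_n, \Psi_{n-1}, \ldots, \Psi_1)\bigr] = \mathbb{E}_\mu\bigl[g(\Psi_n, T(\Psi_n), \ldots, T^{n-1}(\Psi_n))\bigr] = \int_X g(y, T(y), \ldots, T^{n-1}(y)) \, d\mu(y),$$
which is the desired equality. Positivity of $g$ is used only to ensure the expectations are well defined in $[0,\infty]$.

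The only delicate point, rather than an actual obstacle, is the first step: carefully passing from the pointwise fact ``$m_x$ lives on $T^{-1}(x)$'' to the $\mathbb{P}_\mu$-almost-sure identity $T(\Psi_{k+1}) = \Psi_k$ in the canonical path-space construction of the Markov chain. Once the support of $\widetilde{m}_x$ is pinned down, everything else is bookkeeping with stationarity of $\mu$.
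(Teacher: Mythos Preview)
Your argument is correct. The paper does not give its own proof of this lemma; it simply refers the reader to Hennion--Herv\'e, p.~85. Your approach---using that $\widetilde{m}_x$ inherits from the backward-walk property the support on $T^{-1}(x)$, which forces $T(\Psi_{k+1})=\Psi_k$ almost surely, and then invoking stationarity to identify the law of $\Psi_n$ with $\mu$---is the natural direct argument and could serve as a self-contained replacement for the citation.
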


Here we will focus on functions $g$ of the form $g ={\mathbf{1}}_{A}\circ h$, where
$A=(-\infty,t]$ is some suitable interval on the real line, $h:X^n\to \mathbb{R}$ is given by
$h(z_1,\ldots, z_n) = \phi(z_1)+\ldots +\phi(z_n)$, with $\phi:X\to\mathbb{R}$ being a positive
function in some Banach space, for example, the space of  H\"older continuous functions.

In these particular cases, by taking $A=(-\infty,t\sqrt{n}]$, and $h$ as above, we get from
Lemma \ref{lema-eq-distribution} the following identity
\begin{align*}
\mu(\{\textstyle\sum_{j=0}^{n-1}\phi\circ T^{j}(x)\leqslant t\sqrt{n} \})
& =
\int_X {\mathbf{1}}_{(-\infty,t\sqrt{n}]}\circ h(x,T(x), \ldots, T^{n-1}(x))\,d\mu(x)
\\[0.3cm]
& =\mathbb{E}_\mu[{\mathbf{1}}_{(-\infty,t\sqrt{n}]}\circ h(\Psi_n, \Psi_{n-1},\ldots,\Psi_1 )]
\\[0.3cm]
& =
\mathbb{P}_{\mu}(\{ \textstyle\sum_{j=1}^n\phi(\Psi_j)\leqslant t\sqrt{n}\}).
\end{align*}

This relation implies, for example, that if random variables
$(\phi(\Psi_n))_{n\in\mathbb{N}}$ are distributed according to
$\mathbb{P}_{\mu}$ and obeys a CLT then
for  the random variables  $(\phi\circ T^n)_{n\in\mathbb{N}}$
distributed according to $\mu$ also obeys a CLT. 

\section{ Irreducible Feller processes.}


Understanding the conformal measures of a dynamical system allows for a deeper comprehension of its geometric, dynamical, and statistical characteristics. It provides insights into the long-term behavior, and fundamental structures of the system.
One important feature of conformal measures or stationary measures is their support. Next we provide  a general probabilistic condition sufficient   to characterize the support of stationary and  conformal measures of general transfer operators.
\subsection{Irreducible Feller processes }\label{seccao-suporte}
For the sake of convenience it will be useful to   extend the notion of conformal measure to bit more general setting.
Let $\mathsf{M}=(m_x)_{x\in X}$ be a transition probability kernel on a Borel standard space $X$,   a probability measure $\nu \in \mathscr{P}(X)$  is called a conformal measure
to the Markov operator $\mathscr{T}$ if there exists  $\lambda>0$ such that, $\mathscr{T}\nu =\lambda\nu$.

\begin{definition}
	We say that a Markov chain $\Phi=(\Phi_n)$ with  transition kernel $\mathsf{M}=(m_x)_{x\in X}$ satisfying  the  Feller property  is {\it irreducible} when for every non-negative and nontrivial bounded continuous function $\varphi$, and  for every $x\in X$ there exists $n\in \mathbb{N}$ such that $P^n\varphi(x)>0$.
	\end{definition}



\begin{theorem}\label{Crit-Supp-Tot}
	Let $\Phi=(\Phi_n)$ be a  Markov chain  having  $\mathsf{M}=(m_x)_{x\in X}$ as transition kernel, and satisfying the Feller Property.  Suppose that $\Phi=(\Phi_n)$ irreducible and that its Feller operator $P:C_b(X, \mathbb{R})\to C_b(X, \mathbb{R})$ is a bounded linear operator. Then every conformal measure is fully supported.
\end{theorem}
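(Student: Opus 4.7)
The plan is to argue by contradiction: assume $\nu$ is $\lambda$-conformal with $\lambda>0$ but $\mathrm{supp}(\nu)\neq X$, produce a nonnegative nontrivial bounded continuous test function that witnesses a failure of irreducibility at some point of $\mathrm{supp}(\nu)$, and conclude.

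First, if $\mathrm{supp}(\nu)\neq X$, there is a nonempty open set $U\subset X$ with $\nu(U)=0$. Because $X$ is a standard Borel space, I fix a compatible complete separable metric $d_X$; Urysohn's lemma (in its metric-space form) then yields a bounded continuous function $\varphi:X\to[0,1]$ with $\varphi(x_0)=1$ for some $x_0\in U$ and $\varphi\equiv 0$ on $X\setminus U$. In particular $\varphi$ is nonnegative and nontrivial, with $0\leq \int\varphi\,d\nu\leq\nu(U)=0$.

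Next I iterate the conformality. From $\mathscr{T}\nu=\lambda\nu$ one gets by induction $\mathscr{T}^n\nu=\lambda^n\nu$, and iterating the duality $\int f\,d\mathscr{T}\mu=\int Pf\,d\mu$ yields, for every $f\in C_b(X,\mathbb{R})$,
\[
\int_X P^n f\,d\nu=\int_X f\,d\mathscr{T}^n\nu=\lambda^n\int_X f\,d\nu.
\]
Applied to $f=\varphi$ this gives $\int P^n\varphi\,d\nu=0$ for every $n\geq 0$. By the Feller property each $P^n\varphi$ is a nonnegative continuous function on $X$, and a nonnegative continuous function with zero integral against $\nu$ must vanish on $\mathrm{supp}(\nu)$: indeed if $P^n\varphi(x)>0$ for some $x\in\mathrm{supp}(\nu)$, continuity provides an open neighbourhood $V$ of $x$ on which $P^n\varphi$ is bounded below by a positive constant, and $\nu(V)>0$ by the definition of support, forcing $\int P^n\varphi\,d\nu>0$, a contradiction.

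Finally I invoke irreducibility. Fix any $x\in\mathrm{supp}(\nu)$ (nonempty since $\nu(X)=1$). Applied to the nonnegative nontrivial $\varphi\in C_b(X,\mathbb{R})$, irreducibility furnishes $n\in\mathbb{N}$ with $P^n\varphi(x)>0$, contradicting the conclusion of the previous step; hence $\mathrm{supp}(\nu)=X$. The only slightly delicate ingredient is the construction of a genuine continuous bump supported in $U$, which is where the Polish/metric structure of $X$ (rather than only its measurable structure) is used; boundedness of $P$ is not invoked directly, consistent with the fact that any Feller operator associated to a probability kernel automatically satisfies $\|P\|\leq 1$.
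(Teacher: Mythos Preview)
Your proof is correct and takes a genuinely different route from the paper's. The paper argues directly: after a rescaling so that $0<\lambda<1$, it sums the identities $\lambda^{n}\nu(\varphi)=\int P^{n}\varphi\,d\nu$ over $n\geq 1$, interchanges sum and integral by monotone convergence, and observes that the resulting integrand $G(y)=\sum_{n\geq 1}P^{n}\varphi(y)$ is strictly positive everywhere by irreducibility, forcing $\nu(\varphi)>0$ for every nontrivial nonnegative $\varphi$. You instead argue by contradiction: from $\nu(\varphi)=0$ you deduce $\int P^{n}\varphi\,d\nu=0$ for each $n$ separately, then use continuity of $P^{n}\varphi$ to conclude it vanishes on $\mathrm{supp}(\nu)$, which contradicts irreducibility at any point of the support.

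Your argument is somewhat more elementary: it avoids the rescaling device needed to force $\lambda<1$, the series summation, and the appeal to monotone convergence, and it makes transparent (as you note) that the boundedness hypothesis on $P$ is not actually used. The paper's approach, on the other hand, yields the slightly stronger positive statement $\nu(\varphi)>0$ for \emph{every} nontrivial nonnegative $\varphi\in C_b(X,\mathbb{R})$ without passing through a specific Urysohn bump, though of course this is equivalent to full support once one has the metric structure.
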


\begin{proof}
Let $\lambda> 0$ and $\nu\in \mathscr{P}(X)$ be such that $\mathscr{T}\nu=\lambda \nu$.  We can assume, by making a change of scale if necessary, that $0<\lambda<1$.	Then we have for every $n\geqslant 1$, and for every non-negative and non-trivial bounded continuous function $\varphi$ we have  that, 
\[
\lambda^n\nu(\varphi)=\mathscr{T}^n \nu(\varphi)=\int_X P^n\varphi(y) \,d\nu(y).
\]
By summing on $n$ on both sides of the above equality, we get that
\[
\dfrac{\lambda}{1-\lambda}\nu(\varphi)=\sum_{n=1}^\infty \int_X P^n\varphi(y) \,d\nu(y)=\int_X\sum_{n=1}^\infty  P^n\varphi(y)\, d\nu(y),
\]
where the last equality above is guaranteed by the monotone convergence theorem.
Now, since $P$ preserves the cone of the non-negative functions, we have that  
$G(y)=\sum_{n=1}^\infty  P^n\varphi(y)$ is  a non-negative measurable  function. Remembering that $\Phi$ is irreducible we get that $G(y)>0$ for every $y\in X$, thus implying that $\nu(\varphi)>0$. 
\end{proof}


\begin{proposition}\label{suporte-total}
	Let $f\in C_b(X, \mathbb{R})$ be  a continuous function and $\mathsf{M}=(m_x)_{x\in X}$ a backward walk of some measurable map $T:X\to X$. Consider the natural Markov chains $(\Phi_n)$ and $(\Psi_n)$ associated to the transfer operator $\mathscr{L}_{\mathsf{M},f}$ introduced in  previous section. If the process $(\Phi_n)$ is Feller irreducible then  $(\Psi_n)$ has the same property.
\end{proposition}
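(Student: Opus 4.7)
The plan is to exploit the fact that the two transition kernels differ by multiplication by the positive bounded function $e^f$, so that positivity transfers directly from one to the other up to a strictly positive multiplicative constant.

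First, I would note that the Feller operator associated with $(\Psi_n)$ is
\[
\widetilde{P}\varphi(x) = \int_X e^{f(y)}\varphi(y)\,dm_x(y) = P(e^{f}\varphi)(x),
\]
where $P$ is the Feller operator of $(\Phi_n)$. Since $f \in C_b(X,\mathbb{R})$, the function $e^f$ is bounded and continuous, hence $e^f \varphi \in C_b(X,\mathbb{R})$ whenever $\varphi \in C_b(X,\mathbb{R})$; together with the Feller property of $(\Phi_n)$ this shows $\widetilde{P}$ maps $C_b(X,\mathbb{R})$ into itself continuously, so $(\Psi_n)$ is automatically Feller.

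Next, I would iterate the operator $\widetilde{P}$ using the formula already recorded in Section~\ref{introduzindo-operador},
\[
\widetilde{P}^n\varphi(x) = \int_{X^n} e^{f^n(\bar x)}\varphi(x_n)\,dm_x^n(\bar x),
\]
with $f^n(\bar x) = f(x_1)+\cdots+f(x_n)$. Since $f$ is bounded, we have the pointwise lower bound $e^{f^n(\bar x)} \geqslant e^{-n\|f\|_\infty}$ on all of $X^n$, and therefore for any non-negative $\varphi \in C_b(X,\mathbb{R})$,
\[
\widetilde{P}^n\varphi(x) \;\geqslant\; e^{-n\|f\|_\infty}\int_{X^n}\varphi(x_n)\,dm_x^n(\bar x) \;=\; e^{-n\|f\|_\infty}\,P^n\varphi(x).
\]

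Finally, given any non-negative nontrivial $\varphi \in C_b(X,\mathbb{R})$ and any $x\in X$, the irreducibility of $(\Phi_n)$ provides some $n\in\mathbb{N}$ with $P^n\varphi(x)>0$. The inequality above then forces $\widetilde{P}^n\varphi(x)>0$, which is exactly the irreducibility of $(\Psi_n)$. I do not anticipate any substantive obstacle; the only thing to check carefully is the Feller property of $\widetilde{P}$, which as noted reduces to the algebraic identity $\widetilde{P}\varphi = P(e^f\varphi)$ together with the boundedness and continuity of $f$.
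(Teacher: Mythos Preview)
Your argument is correct and follows essentially the same route as the paper: both bound $e^{f^n(\bar x)}$ below by its (strictly positive) infimum to obtain $\widetilde{P}^n\varphi(x)\geqslant c_n\,P^n\varphi(x)$ and then invoke irreducibility of $(\Phi_n)$. Your version is marginally more complete in that you explicitly verify the Feller property of $(\Psi_n)$, which the paper leaves implicit.
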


\begin{proof}
 Since $(\Phi_n)$ is irreducible, given $x\in X$ there exists an integer $M\equiv M(x)$ such that $\mathscr{L}_{\mathsf{M},0}^M\varphi(x)>0$. Thus, we have the following, 
\begin{multline}
\mathscr{L}_{\mathsf{M},f}^M \varphi(x)=
\int_{X^M} e^{f^M(\bar x)}\varphi(x_n)\,d m^M_x(\bar x)
\\
\geqslant
\bigl(\inf  e^{f^M(\bar x)}\bigl) \int_{X^M}  \varphi(x_n)\,d m^M_x(\bar x)\,
\\= \inf \bigl( e^{f^M(\bar x)}\bigr)\mathscr{L}_{\mathsf{M},0}^M\varphi(x)>0,
\end{multline}
which is the desired result.
\end{proof}

\subsection{The standard Ruelle operator}\label{Ruelle-padrao}
	Let $E$ be a standard Borel space, and $\mu$ an a priori probability measure  in $\mathscr{P}(E)$. For each $x\in  X=E^{\mathbb{N}}$ consider the probability measure  $m_x = \mu\times \delta_x$ on $\mathscr{B}(X)$. Then $(m_x)_{x\in X}$ is a transition probability kernel and for each $f\in C_b(X, \mathbb{R})$ fixed we have that, 
	\begin{align*}
	\mathscr{L}_{\mathsf{M}, f} \varphi(x) 
	&= \int_{X} e^{f(y)} \varphi(y) d m_x (y)
	= \int_{E^{\mathbb{N}}} e^{f(y)} \varphi(y)\, d m_x (y)\\[0.3cm]
	&=\int_{E\times E^{\mathbb{N}}} e^{f(ay)} \varphi(ay)\, d \mu(a) \, d\delta_x (y)
	=\int_{E}\left[\int_{E^{\mathbb{N}}} e^{f(ay)} \varphi(ay)\, d\delta_x (y)\right] d\mu(a) \\[0.3cm]
	&=\int_{E} e^{f(ax)} \varphi(ax)\, d \mu(a).
	\end{align*}
	Therefore the transfer operator associated to   $\mathsf{M}=(\mu\times \delta_x)_{x\in X}$ coincides with
	 the Ruelle operator for the shift map considered in \cite{MR2864625, MR3538412, MR3377291, CSS19}.  
In each one of the previous references is showed that for a sufficiently regular  potential $f$,  we have that $\mathscr{G}^*(\varrho_f)\neq \varnothing.$

		Observe that  a realization of the Markov process associated to the transition kernel $\mathsf{M}=(\mu\times \delta_x)_{x\in X}$ started at $x$ is given by $\Phi_n=a_1\cdots a_n x$. Below by  showing  that the process $(\Phi_n)$ is irreducible we  will proof that  any conformal measure $\nu\in \mathscr{G}^*(\varrho_f)$ is fully supported.
	
\begin{proposition}
Consider the following  backward   walk $\mathsf{M}=(\mu\times \delta_{x})_{x\in X}$  for the full shift and let $f\in C_b(X, \mathbb{R})$ be a bounded continuous potential. Then any conformal measure with respect to  $\mathscr{L}_{\mathsf{M},f}$ is fully supported.
\end{proposition}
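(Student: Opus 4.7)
The proof will follow the strategy flagged just before the statement: show that the uniform-weight chain $(\Phi_n)$ with transition kernel $\mathsf{M}=(\mu\times \delta_x)_{x\in X}$ is Feller irreducible, then combine this with the telescoping argument of Theorem \ref{Crit-Supp-Tot} and the dominance inequality from the proof of Proposition \ref{suporte-total}. Since the potential $f$ is not assumed to be normalized, the associated chain $(\Psi_n)$ need not be defined by a genuine probability kernel, so I will not literally invoke Theorem \ref{Crit-Supp-Tot} for $(\Psi_n)$; instead I will rerun its argument directly for the positive operator $\mathscr{L}_{\mathsf{M},f}$.

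First I would verify the Feller property of $(\Phi_n)$. Its transition operator acts as $P\varphi(x)=\int_{E}\varphi(ax)\,d\mu(a)$ for $\varphi\in C_b(X,\mathbb{R})$, and continuity of $P\varphi$ follows at once from dominated convergence together with continuity of the prepending map $x\mapsto ax$ in the product topology on $X=E^{\mathbb{N}}$.

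Next, and this is the core step, I would prove irreducibility of $(\Phi_n)$. Assuming (as is standard in this setting) that the a priori measure $\mu$ has full topological support on $E$, take any non-negative nontrivial $\varphi \in C_b(X,\mathbb{R})$ and choose $z=(z_1,z_2,\ldots)\in X$ with $\varphi(z)>0$. By continuity of $\varphi$ in the product topology, there exist $k\geqslant 1$, open neighborhoods $V_i\ni z_i$ in $E$ for $i=1,\ldots,k$, and $c>0$ such that $\varphi \geqslant c$ on the cylinder $V_1\times\cdots\times V_k\times E^{\mathbb{N}}$. Since a realization of $(\Phi_n)$ starting at $x$ has the form $\Phi_n=a_1\cdots a_n x$ with $a_i$ i.i.d.\ of law $\mu$, the event that $\Phi_k$ lies in this cylinder amounts to prescribing that each of the first $k$ drawn letters lie in a corresponding $V_i$, which has probability $\mu(V_1)\cdots\mu(V_k)>0$. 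Thus $P^k\varphi(x)\geqslant c\,\mu(V_1)\cdots\mu(V_k)>0$ for every $x\in X$.

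Finally I would mimic the proof of Theorem \ref{Crit-Supp-Tot}. Given $\nu\in\mathscr{G}^*(\varrho)$, replacing $f$ by $f-c$ for sufficiently large $c$ rescales the transfer operator while preserving $\nu$, so we may assume $0<\varrho<1$. The eigenvalue identity $\varrho^n\nu(\varphi)=\int_X \mathscr{L}_{\mathsf{M},f}^n\varphi\,d\nu$, summed over $n\geqslant 1$ via monotone convergence, gives
\[
\frac{\varrho}{1-\varrho}\,\nu(\varphi)=\int_X\sum_{n=1}^{\infty}\mathscr{L}_{\mathsf{M},f}^n\varphi(y)\,d\nu(y).
\]
The Proposition \ref{suporte-total} inequality
$\mathscr{L}_{\mathsf{M},f}^n\varphi(x)\geqslant e^{-n\|f\|_\infty}P^n\varphi(x)$,
combined with irreducibility of $(\Phi_n)$, forces $\sum_{n=1}^\infty \mathscr{L}_{\mathsf{M},f}^n\varphi(x)>0$ for every $x\in X$, so the right-hand side above is strictly positive and $\nu(\varphi)>0$. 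The main obstacle lies in the irreducibility step, where one must exploit the concrete cylinder structure of the backward walk to extract positivity of $P^k\varphi$; once that is established, the rest is a direct transcription of the proof of Theorem \ref{Crit-Supp-Tot}. A subtle point worth flagging is that the argument genuinely relies on $\mu$ having full topological support on $E$: without it, the reachable set of $(\Phi_n)$ is confined to sequences drawn from $\mathrm{supp}(\mu)$ and the conclusion can fail.
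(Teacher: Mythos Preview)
Your proposal is correct and follows essentially the same route as the paper: establish irreducibility of the zero-potential chain $(\Phi_n)$ via the cylinder/product-topology structure and the full support of $\mu$, then combine the pointwise minoration $\mathscr{L}_{\mathsf{M},f}^n\varphi\geqslant e^{-n\|f\|_\infty}P^n\varphi$ with the geometric-series argument of Theorem~\ref{Crit-Supp-Tot}. The only differences are expository: you write the irreducibility step with an explicit cylinder $V_1\times\cdots\times V_k\times E^{\mathbb{N}}$ (obtaining a uniform $k$ for all $x$), whereas the paper lets $M$ depend on $x$; and you are more careful than the paper in not applying Theorem~\ref{Crit-Supp-Tot} verbatim to $(\Psi_n)$, since without normalization $e^f\,dm_x$ need not be a probability kernel, and you instead rerun the argument directly for the positive operator $\mathscr{L}_{\mathsf{M},f}$.
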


\begin{proof}
		It is enough to consider a non-negative and non-trivial continuous function $\varphi \in C_b(X, \mathbb{R})$. By continuity there exists an open set $U$ such that $\varphi$ is strictly positive in $U$.
On the other side, one can easily verify  that for each $x\in X$ there always exists a positive integer $M$ such that $\sigma^{-M}x\cap U\neq \varnothing,$ i.e, there exists $(a_1, \ldots, a_M)\in \mathbb{R}^M$ so that, $\varphi(a_1a_2\ldots a_M x)>0$.  Again, by continuity we can find a small neighborhood $W$ of $({a}_1,\ldots, {a}_M)$ in $\mathbb{R}^M$ such that for every $(\widetilde{a_1},\ldots, \widetilde{a}_M)\in W$ we have that $\widetilde{a_1}\cdots \widetilde{a}_Mx$ still in $U$.
	Then we have the following, 
\begin{align*}
\mathscr{L}_{\mathsf{M},0}^{M}\varphi(x)=\int_{\mathbb{R}^M}\varphi(a_1\cdots a_Mx)\, \mu^M(a_1\cdots a_M)
\\
\geqslant
\int_{W}\varphi(a_1\cdots a_Mx)\, \mu^M(a_1\cdots a_M)>0.
\end{align*}
  Since $\mu$ is fully supported we have that the above integral is positive.
From that by using Proposition \ref{suporte-total}   and Theorem \ref{Crit-Supp-Tot} we conclude that for each continuous potential $f$,  any conformal measure $\nu\in \mathscr{G}^*(\varrho_f)$ is fully supported. 
\end{proof}

\subsection{Ruelle operator of the backward weighted shift}\label{wheighted-shift}
	In this section $X$ will denote  $\ell^p({\mathbb{N}})$ with $1 \leqslant p < \infty.$
	
	We fix values $0 < c < c'$ and  a weight sequence $(\alpha_n)_{n \geqslant 1}$ satisfying $\alpha_n \in (c, c')$ for each $n \in \mathbb{N}$. The {\it backward weighted shift} associated to the sequence $(\alpha_n)_{n \geqslant 1}$  
	is defined as the linear map $L : X \to X$ given by
	\[
	L((x_n)_{n \geqslant 1}) = (\alpha_n x_{n+1})_{n \geqslant 1} \,.
	\]

	For a weight sequence $(\alpha_n)_{n\geqslant 1}$ 
	 we consider the auxiliary  map $S:X\to X$ defined as 
	$
	S(x)=(\alpha_1^{-1}x_1,\alpha_2^{-1}x_2, \ldots).
	$
		Let  $\mu$ be  a fully supported  Borel  a priori measure in $\mathbb{R}$  and let  $\mathsf{M}=(m_x)_{x\in X}$ be the  transition probability kernel defined as follows, 
	\[
	m_x=\delta_{S(x)}\times \mu.
	\]
Let $f\in C_b(X, \mathbb{R})$ be a bounded continuous potential, by observing that $\mathbb{R}\times \ell^p({ \mathbb{N}})$ is isometric to  $\ell^p({ \mathbb{N}})$ 
	we have that,
	\begin{multline*}
	\mathscr{L}_{\mathsf{M}, f}\varphi(x) 
	= \int_{\ell^p({\mathbb{N}})} e^{f(y)} \varphi(y)\, d m_x (y)
	=
	\int_{\mathbb{R}\times \ell^p({ \mathbb{N}})} e^{f(ry)} \varphi(ry)\, d \mu(r) \, d\delta_{S(x)} (y)
	\\[0.3cm]
	=\int_{\mathbb{R}}\left[\int_{\ell^p({\mathbb{N}})} e^{f(ry)} \varphi(ry)\, d\delta_{S(x)} (y)\right]\, d\mu(r)
	=\int_{\mathbb{R}} e^{f(rS(x))} \varphi(rS(x))\, d \mu(r)
	\\[0.3cm]
	=
	\int_{\mathbb{R}} e^{f\bigl(r, \frac{x_1}{\alpha_1}, \frac{x_2}{\alpha_2}, \ldots\bigr)}\varphi\bigl(r, \frac{x_1}{\alpha_1}, \frac{x_2}{\alpha_2}, \ldots\bigr) d \mu(r).
	\end{multline*} 
	which coincides with the Ruelle operator associated to the weighted shift  introduced in \cite{MR4344719}.
	In this case a realization of the Markov process associated to the transition kernel $\mathsf{M}=(\mu\times \delta_{S(x)})_{x\in X}$ started at $x$ is given by
	$\Phi_n=\bigl(r_1, \ldots, r_n, \frac{x_1}{\beta_1^n}, \frac{x_2}{\beta_2^n}, \ldots\bigr)$, where $(r_1, \ldots, r_n)\in \mathbb{R}^n$. 
	
		Suppose $0 < c < c'$ and $1 < c'$. 
		For each $k,n\in\mathbb{N}$ we define  
		$\beta_k^n \equiv  \alpha_k \ldots \alpha_{k+n-1}$ and 
		\begin{eqnarray}\label{dnn}
		\displaystyle d_n = \inf_{k \geqslant 1} \beta_k^n.
		\end{eqnarray}
			In \cite{MR4344719} the authors consider a  weight  sequence $(\alpha_n)_{n \geqslant 1}$ that satisfy $\alpha_n \in (c, c')$, for all $n \in \mathbb{N}$ and
	\begin{eqnarray}
	\label{d}
 \displaystyle \sum_{n =1}^{\infty} (d_n)^{-\alpha} < \infty.
	\end{eqnarray}
	They showed, under suitable conditions on the a priori measure $\mu$, and for  a locally Hölder potential $f$,  that the Ruelle operator   $\mathscr{L}_{\mathsf{M}, f}$ has at least one equilibrium measure (ergodic).
	
	It is worth mentioning that condition (\ref{d}) implies that $L$ is {\it frequently hypercyclic}\footnote{This is a standard terminology from  Dynamics of Linear Operators Theory,  see \cite{LinDy}}  by meaning that, 
	there exists $x\in X$ such that for any open set $V$ the set $N(x, V)=\{n: L^nx\in V\}$ has positive lower density, i.e
\[
	\liminf_{n\to \infty} \dfrac{1}{n}\# N(x,V)\cap\{1,\ldots,n \}>0,
\]
 see \cite[Chapter 6]{LinDy}. The property of to be frequently hypercyclic implies in turn  that $L$ is  {\it strongly  transitive}, i.e, for any non-empty  open set $U$ we have that, 	
\begin{equation}\label{Strong-Transitivity}
X\setminus\{0\}\subset \bigcup_{n=1}^\infty L^n(U),
\end{equation}
 indeed see \cite[Theorem 4.4]{MR4200794}. It is useful to observe that  inclusion (\ref{Strong-Transitivity}) can be rewritten as follows, 
 \begin{equation}\label{Strong-Transitivity-2}
 \forall x\in X\setminus \{0\}, \forall U\neq \varnothing~~\text{open set}~ \exists n\in \mathbb{N},~\text{such that}~L^{-n}(x)\cap U\neq \varnothing.
 \end{equation}

We will show that $(\Phi_n)$ satisfies the conditions of  Theorem \ref{Crit-Supp-Tot}. 

\begin{theorem}
Consider the following  backward   walk $\mathsf{M}=(\mu\times \delta_{S(x)})_{x\in X}$ for a strongly transitive  weighted shift and let $f\in C_b(X, \mathbb{R})$ be a bounded continuous potential. Then any conformal measure with respect to $\mathscr{L}_{\mathsf{M},f}$ is fully supported.
\end{theorem}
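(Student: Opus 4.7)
The plan is to follow the same strategy as the preceding proposition in Section \ref{Ruelle-padrao}: first establish the irreducibility of the unweighted chain $(\Phi_n)$ associated to the kernel $\mathsf{M}=(\mu\times\delta_{S(x)})_{x\in X}$, then use Proposition \ref{suporte-total} to transfer this positivity to $\mathscr{L}_{\mathsf{M},f}$, and finally invoke Theorem \ref{Crit-Supp-Tot} to conclude that $\nu(\varphi)>0$ for every non-negative, non-trivial $\varphi\in C_b(X,\mathbb{R})$. The Feller property of $\mathsf{M}$ and the boundedness of its Feller operator, both required by Theorem \ref{Crit-Supp-Tot}, are essentially immediate: since $\alpha_n\in(c,c')$, the auxiliary map $S:\ell^p(\mathbb{N})\to\ell^p(\mathbb{N})$ is a bounded linear operator, hence $(x,r)\mapsto\varphi(rS(x))$ is jointly continuous; dominated convergence yields continuity of $P\varphi(x)=\int_{\mathbb{R}}\varphi(rS(x))\,d\mu(r)$, and $\|P\varphi\|_\infty\leqslant\|\varphi\|_\infty$ gives boundedness.

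For the irreducibility of $(\Phi_n)$, I would fix a non-negative, non-trivial $\varphi\in C_b(X,\mathbb{R})$ and, by continuity, choose a non-empty open set $U$ together with $\varepsilon>0$ such that $\varphi\geqslant\varepsilon$ on $U$. The realization $\Phi_n=(r_1,\ldots,r_n,x_1/\beta_1^n,x_2/\beta_2^n,\ldots)$ provides, for each fixed $x$, a continuous parameterization $(r_1,\ldots,r_n)\mapsto\Phi_n$ of the preimage $L^{-n}(x)$. For $x\neq 0$, strong transitivity in the form (\ref{Strong-Transitivity-2}) supplies an integer $n$ and a point of $L^{-n}(x)\cap U$; pulling this point back through the parameterization produces $(r_1^0,\ldots,r_n^0)\in\mathbb{R}^n$ with $\Phi_n\in U$. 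By continuity, this inclusion persists on an open neighborhood $W$ of $(r_1^0,\ldots,r_n^0)$, and since $\mu$ is fully supported, $\mu^n(W)>0$, whence
\[
\mathscr{L}_{\mathsf{M},0}^n\varphi(x)\geqslant\varepsilon\,\mu^n(W)>0.
\]

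The main delicacy is the case $x=0$, which the strong transitivity condition (\ref{Strong-Transitivity-2}) explicitly excludes. Here I would appeal instead to density of eventually-zero sequences in $\ell^p(\mathbb{N})$: the open set $U$ contains some point $y=(y_1,\ldots,y_N,0,\ldots)$, and a direct computation shows that at $x=0$ the map $(r_1,\ldots,r_N)\mapsto\Phi_N$ is a linear bijection onto $\mathbb{R}^N\times\{0\}^\infty$, so an explicit choice of parameters realizes $y$ as $\Phi_N$; the preceding continuity-plus-full-support argument then repeats verbatim to give $\mathscr{L}_{\mathsf{M},0}^N\varphi(0)>0$. Once the irreducibility of $(\Phi_n)$ is secured, Proposition \ref{suporte-total} delivers $\mathscr{L}_{\mathsf{M},f}^M\varphi(x)>0$ for every $x$, and Theorem \ref{Crit-Supp-Tot} (after the harmless rescaling $f\mapsto f+\log c$ that brings the conformal eigenvalue below one) concludes that $\nu(\varphi)>0$, so $\nu$ is fully supported.
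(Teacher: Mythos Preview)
Your proposal is correct and follows essentially the same route as the paper: show that the unweighted chain $(\Phi_n)$ is irreducible by using strong transitivity to land in the open set $U$ where $\varphi>0$, then apply Proposition \ref{suporte-total} and Theorem \ref{Crit-Supp-Tot}. Your treatment is in fact more careful than the paper's, which silently skips both the verification of the Feller property and the case $x=0$ (excluded by (\ref{Strong-Transitivity-2})); your handling of $x=0$ via the density of finitely supported sequences is exactly the right patch.
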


\begin{proof}
Let $\varphi \in C_b(X, \mathbb{R})$ be non-negative and nontrivial. Continuity  implies the existence of an open set $U$ where $\varphi$ is strictly positive. By (\ref{Strong-Transitivity-2}) there  exists $(r_1, \ldots,r_n)\in \mathbb{R}^n$ such that   $\varphi\bigl(r_1,\ldots ,r_n, \frac{x_1}{\beta_1^n}, \frac{x_2}{\beta_2^n}, \ldots\bigr)>0$. Continuity of $\varphi$ implies that the previous inequality remains valid in some open neighborhood $V$of $(r_1, \ldots,r_n)$ in $\mathbb{R}^n$. Therefore, letting $\mu^n$ denote the product measure $\mu\times \cdots \times\mu$ we have that
	\begin{multline*}
 \mathscr{L}_{\mathsf{M},0}^{n}\varphi(x)
=\int_X \varphi(y)\, dm_x(y) =\int_{\mathbb{R}^n} \varphi  \bigl(r_1,\ldots ,r_n, \frac{x_1}{\beta_1^n}, \frac{x_2}{\beta_2^n}, \ldots\bigr) \, d\mu^n(r_1,\ldots ,r_n)                             
\\
\geqslant 
\int_{V} \varphi  \bigl(r_1,\ldots ,r_n, \frac{x_1}{\beta_1^n}, \frac{x_2}{\beta_2^n}, \ldots\bigr) \, d\mu^n(r_1,\ldots ,r_n)>0
\end{multline*}
 which means that $(\Phi_n)$ is irreducible. Then, it follows from Proposition \ref{suporte-total} and Theorem \ref{Crit-Supp-Tot} that any conformal measure of  $\mathscr{L}_{\mathsf{M},f}^{n}$  is fully supported.
\end{proof}

\begin{remark}
	A sufficient condition  for strong transitivity in the context of linear dynamics is to require $\bigcup_{n=1}^\infty \text{Ker}(T^n)$ to be dense in $X$. (see \cite[Corollary 2.2]{MR4200794}).

\end{remark}

%
%
%

\subsection{Nonexpansive IFS}\label{Nonexpansive}
In this section  $X$ denotes a compact subset of $\mathbb{R}^d$. For each $i=1, \ldots m,$ consider $T_i:X\to X$ a continuous map. Let $p_1, p_2, \ldots, p_m,$ be
 continuous positive   real functions satisfying  $\sum_{i=1}^n p_i(x)=1, \, \forall x\in X$. 
	We say that $T:X \to X$  is {\it nonexpansive} if $|T(x)-T(y)|\leqslant |x-y|$ $\forall x, y \in X$ and  {\it weakly contractive} if, 
	\[
	\alpha_T(t):=\sup_{|x-y|\leq t}|T(x)-T(y)|<t, \forall t>0.
	\]
For a multi-index $J=(j_1, \ldots, j_n),$ $1\leqslant j_k\leqslant m$, set, $T_J(x)=T_{j_1}\circ \cdots \circ T_{j_n}(x)$.
The following result  is proved in \cite[Proposition 2.1.]{MR1881258}	

\begin{proposition}\label{invariant-attractor}
Suppose that $\{T_i\}_{i=1}^m$ are continuous on $X$ and at least one of them is weakly contractive. 
	Then there exists a unique smallest nonempty compact set $K$ such that, 
	\[
	K=\bigcup_{i=1}^m T_i(K).
	\] 
	Moreover, for any $x\in K$, the closure of $\{ T_{J}(x): |J|=n, n\in \mathbb{N}\}$ is $K$.
\end{proposition}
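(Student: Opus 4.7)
The plan is to build $K$ explicitly as the closure of the semigroup orbit of a distinguished fixed point produced by the weakly contractive map, and then verify smallness by showing that every invariant compact set must already contain that fixed point.

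Without loss of generality assume $T_1$ is weakly contractive. The first step I would prove is an Edelstein-type statement: $T_1$ has a unique fixed point $p\in X$, and $T_1^n(y)\to p$ for every $y\in X$. Existence would come from minimizing the continuous function $y\mapsto |T_1(y)-y|$ on the compact set $X$ at some $p$; if the minimum value $t_0=|T_1(p)-p|$ were positive, weak contractivity applied at $t_0$ would yield $|T_1(T_1 p)-T_1 p|\leqslant \alpha_{T_1}(t_0)<t_0$, contradicting minimality. Uniqueness and convergence would then follow from the fact that $|T_1^{n+1}y-T_1^n y|$ is non-increasing, together with the observation that any subsequential limit of $T_1^n(y)$ must itself be a fixed point of $T_1$ by the same contradiction argument.

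Next I would set
\[
K \eqdef \overline{\{T_J(p) : J \text{ a finite multi-index}\}},
\]
compact as a closed subset of $X$. Writing $F(A)=\bigcup_{i=1}^m T_i(A)$ for the Hutchinson operator, I would verify $F(K)=K$: the inclusion $F(K)\subseteq K$ uses continuity, since each $T_i$ sends a generating point $T_J(p)$ to $T_{iJ}(p)\in K$ and continuity plus closedness of $K$ handles limits; the inclusion $K\subseteq F(K)$ uses that $F(K)$ is closed (as a finite union of continuous images of the compact set $K$) and already contains every $T_J(p)$ with $|J|\geqslant 1$, as well as $p=T_1(p)\in T_1(K)$, hence a dense subset of $K$. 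To show smallness, I would take any nonempty compact $K'\subseteq X$ with $F(K')=K'$ and any $x\in K'$: invariance forces $T_1^n(x)\in K'$, and $T_1^n(x)\to p$ together with closedness forces $p\in K'$; iterating invariance yields $T_J(p)\in K'$ for every $J$, whence $K\subseteq K'$. Uniqueness of the smallest set is then immediate, and the final assertion follows by the same recipe: for $x\in K$, letting $K_x$ denote the closure of $\{T_J(x):|J|\geqslant 1\}$, invariance of $K$ gives $K_x\subseteq K$, while $T_1^n(x)\to p$ places $p$ in $K_x$, and then iterating through $T_J(p)$ gives $K\subseteq K_x$.

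The main obstacle is the Edelstein-type step: weak contractivity only yields the pointwise strict inequality $\alpha_{T_1}(t)<t$ for each $t>0$, never a uniform contraction factor, so the Banach fixed point theorem is unavailable and compactness of $X$ must be exploited carefully at two places — to secure the minimum defining $p$ and to control subsequential limits of iterates. Once the fixed point $p$ and the convergence $T_1^n(y)\to p$ are in hand, every remaining step is an elementary topological consequence of continuity and the invariance relation $K=F(K)$.
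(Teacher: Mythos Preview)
The paper does not actually prove this proposition: it is quoted verbatim from \cite[Proposition~2.1]{MR1881258} and used as a black box, so there is no in-paper argument to compare against. That said, your proposal is sound and would serve as a self-contained proof. The Edelstein-type step is the only place requiring care, and your outline handles it correctly: compactness of $X$ gives a minimizer $p$ of $y\mapsto |T_1y-y|$, weak contractivity forces the minimum to be zero, and for convergence of iterates one uses that the distances $|T_1^{n}y-p|$ (or $|T_1^{n+1}y-T_1^{n}y|$) are strictly decreasing while positive, so any subsequential limit $z$ satisfies $|T_1z-p|=|z-p|$, contradicting $\alpha_{T_1}(|z-p|)<|z-p|$ unless $z=p$. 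Once $T_1^n(y)\to p$ is established, your construction of $K$ as the closed semigroup orbit of $p$ and the verification of invariance, minimality, and the ``moreover'' clause are straightforward; note in the last step that you only need forward invariance $F(K_x)\subseteq K_x$, not full invariance, to push the orbit of $p$ into $K_x$.
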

	
	Now, for each $x\in X$ define   $m_x = \sum_{i=1}^{n}p_i(x) \delta_{T_i(x)}$ on $\mathscr{P}(K)$. Is clear that  $\mathsf{M}=(m_x)_{x\in X}$ is a transition probability kernel and we have for any $\varphi\in C(K, \mathbb{R})$ that, 
	\begin{align*}
	\mathscr{L}_{\mathsf{M}, 0} \varphi(x) 
	&= \int_{X} e^{f(y)} \varphi(y) \,d m_x (y)
	= \sum_{i=1}^{n} p_i(x) \varphi(T_i(x)),
	\end{align*}
	which turns out to be the classical transfer operator associate to the IFS generated by $\{T_1, T_2, \ldots, T_n\}.$ 
More generally if $J=(j_1, \ldots, j_n)$, $1\leqslant j_k\leqslant m$, is a multi-index and we denote 
\[
p_{T_J}(x):=p_{j_1}(T_{j_1}\circ T_{j_2}\circ \cdots \circ T_{j_n})\cdots p_{j_{n-1}}(T_{j_{n-1}} \circ T_{j_n})p_{j_n}(T_{j_n}),
\]
then we have 
\begin{align*}
\mathscr{L}_{\mathsf{M}, 0}^n \varphi(x) 
&= \sum_{|J|=n}^{n} p_{T_J}(x)\varphi(T_J(x)).
\end{align*}

We will show that $\Phi=(\Phi_n)$ is irreducible. Indeed, let $\varphi \in C(K, \mathbb{R})$ with $\varphi\geqslant 0$ and $\varphi \not \equiv 0$. Define $V=\{x\in K: \varphi(x)>0\}.$ It follows from Proposition  \ref{invariant-attractor} that there exist a multi-index $J_0$ such that $T_{J_0}(x)\in V$. Let $k=|J_0|$, then we have that
 \[
 	\mathscr{L}_{\mathsf{M}, 0}^{k} \varphi(x) =\sum_{|J|=k}p_{T_J(x)}\varphi(T^J(x))\geqslant p_{T_{J_0}}(x)\varphi(T_{J_0}(x))>0.
 \]

Therefore, as a consequence of Proposition $\ref{suporte-total}$ and Theorem $\ref{Crit-Supp-Tot}$ any conformal measure of 	$\mathscr{L}_{\mathsf{M}, 0}^{k}$ is fully supported.

 \begin{remark}
 	This remark regard Examples \ref{Ruelle-padrao}, \ref{wheighted-shift} and \ref{Nonexpansive}. One should note that  the full shift in the Example \ref{Ruelle-padrao} has a property much stronger than being  strongly transitive, indeed it holds the following,
 	\begin{equation}\label{Irreducibility-2}
 	\forall U\neq \varnothing~~\text{open set}~ \exists M=M(U)\in \mathbb{N},~\text{such that}~ \forall x\in X,\sigma^{-1}(x)\cap U\neq \varnothing, 
 	\end{equation}
 	or, in other words, for each open set $U$ there exists an integer $M$ such that we have $ \sigma^MU=X$.  	
 	This is a kind of a weak topological form of {\it Doebling's minorization condition} see \cite[Theorem 9.1]{Bhbook}. Condition (\ref{Irreducibility-2}) seems also to be related to the Bhattacharya-Waymire splitting condition, see \cite[Section 3.5.2 ]{Bhbook}. However (\ref{Irreducibility-2}) does not imply either of these two probabilistic properties because both of them implies exponential asymptotically stability of the Markov operator.

 	Obviously  property (\ref{Irreducibility-2}) is not valid for the  backward weighted shift (in fact $\text{diam}(L^kU)\leqslant \|L\|^k \text{diam}(U)$). Nevertheless $L$ has the property
 	$
 	X\setminus\{0\}\subset \bigcup_{n=1}^\infty L^n(U),
 	$
 	which is in turn a much weaker property than (\ref{Irreducibility-2}), but still a stronger form of transitivity.
 	
 	The conclusion of Proposition \ref{invariant-attractor} is also a type of strong transitivity formulated for IFS.
 	
 	These examples suggests that full support property of the conformal measure is related to this strong form transitivity. 
 	
 	

 \end{remark}

 \subsection{Application: Extension of the Transfer Operator on a non-compact setting}
%
%
%
%
%
%

Suppose  $\nu$ is  a conformal measure of some transfer operator $\mathscr{L}_{\mathsf{M},f}$. Extending  $\mathscr{L}_{\mathsf{M},f}$ to some $\mathbb{L}:L^1(\nu)\to L^1(\nu)$, having some control on the spectrum of   $\mathbb{L}$ has a very good karma. Indeed,  in doing so we can   see the $\mathbb{L}$ as a Markov process in the {\it Hopf's sense} (see \cite{hopf}).

\begin{definition}[Markov Processes]
	A Markov process is defined as an ordered quadruple $(X,\mathscr{F},\mu,T)$, 
	where the triple $(X,\mathscr{F},\mu)$ is a sigma-finite measure space with a positive measure
	$\mu$ and $T$ is a bounded linear operator acting on $L^1(\mu)$
	satisfying:
	\begin{itemize}
		\item[(i)] $T$ is a contraction: 
		$\sup\{\|T\varphi\|_{1}: \|\varphi\|_{1}\leqslant 1 \}\equiv \|T\|_{\mathrm{op}}\leqslant 1$;
		\item[(ii)] $T$ is a positive operator, that is, if $\varphi\geqslant 0$, then $T\varphi\geqslant 0$.
	\end{itemize}
\end{definition}

Here, the sigma-algebra $\mathscr{F}$ will be the Borel sigma-algebra $\mathscr{B}(X)$, 
$T$ is the extension $\mathbb{L}:L^1(\nu)\to L^1(\nu)$ of 
the transfer operator $\mathscr{L}_{\mathsf{M},f}$, and $\mu=\nu$. 
Condition (ii), the positivity property of $\mathbb{L}$,
is inherited from $\mathscr{L}_{\mathsf{M},f}$, and the condition (i) follows from  $\|\mathbb{L}\|_{\mathrm{op}}=\varrho(\mathscr{L}_{\mathsf{M},f})$,
and the assumption $\varrho (\mathscr{L}_{\mathsf{M},f})=1$.

We refer to \cite[Section 2]{MR4246976} to the reader interested in how to explore the Hopf-Markov  theory  in the Thermodynamic Formalism context.

\subsubsection{Extending the transfer operator}

In this section $X= E^\mathbb{N}$  or $X=\ell^p(\mathbb{N})$, $f\in C_b(X, \mathbb{R})$ will denote a bounded continuous potential and $\mathsf{M}$ will stand for $\mathsf{M}=(\mu \times \delta_x)_{x\in X}$ or $\mathsf{M}=(\mu \times \delta_{S(x)})_{x\in X}$.

Since	the transfer operator   $\mathscr{L}_{\mathsf{M}, f}$ is irreducible, Theorem \ref{Crit-Supp-Tot} shows that any conformal measure of $\mathscr{L}_{\mathsf{M}, f}$ is fully supported,	therefore  there is a linear continuous embedding 
	$q:(C_b(X, \mathbb{R}),\|\cdot\|_{\infty})\hookrightarrow (L^p(\nu),\|\cdot\|_{p})$.

\begin{remark}
 Theorem \ref{Crit-Supp-Tot} is fundamental to get  an embedding of $C_b(X, \mathbb{R})$ onto $(L^p(\nu),\|\cdot\|_{p})$. In general that embedding could not exists, fact if $\nu=\delta_x$ is the delta Dirac measure concentrated in some $x\in X$ then $\text{dim}_{\mathbb{R}}L^1(\nu)=1$ while   $\text{dim}_{\mathbb{R}}C_b(X, \mathbb{R})=\infty.$
\end{remark}


The most short path to provide the extension is to consider the map $L:\text{dom}(L)\subset \mathcal{L}^1(\nu)\to \mathcal{L}^1(\nu)$, defined by 
\begin{equation}\label{pre-estensao}
L\varphi(x)=\int_X e^{f(y)}\varphi(y)\, dm_x(y), \quad \forall x\in X,
\end{equation}
where  $\text{dom}(L)$ means  the subset of all $\varphi \in \mathcal{L}^1(\nu)$ such that the expression (\ref{pre-estensao}) makes sense for all $x\in X$. It is clear that $\text{dom}(L)$ is a subspace of $\mathcal{L}^1(\nu)$ and that contains $C_b(X, \mathbb{R})$.

 The map given by (\ref{pre-estensao})  seems to be quite auspicious  to fulfill our purposes, nevertheless some technical issues need to be clarified. The most fundamental question about it is whether this expression preserves $\nu$-equivalence classes. This question leads us to the following lemma.
 
 \begin{lemma}\label{quasi-invariant}
  Let $\mathsf{M}=(m_x)_{x\in X}$ be the transition probability kernel of subsection \ref{Ruelle-padrao} or \ref{wheighted-shift}. Let $\mu$ be the a priori measure and $\nu$ a conformal measure of $\mathscr{L}_{\mathsf{M}, f}.$ Then there is $K>0$ such that, 
 \[
 \mu \times \nu(B)\leqslant K \nu(B), \quad \forall B\in \mathscr{B}(X).
 \]
 \end{lemma}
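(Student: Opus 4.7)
My plan is to derive the inequality as a direct consequence of the conformality relation $\mathscr{L}_{\mathsf{M},f}^{*}\nu=\varrho\nu$ combined with the explicit form of the kernel $m_x$ in each of the two subsections and the uniform bound $e^{f(\cdot)}\geqslant e^{-\|f\|_{\infty}}$ that is available because $f\in C_b(X,\mathbb{R})$. Throughout I will interpret $\mu\times\nu$ as a measure on $X$ via the natural identification $(a,x)\mapsto ax$ in the shift setting of Subsection \ref{Ruelle-padrao}, and $(r,x)\mapsto rS(x)$ in the weighted shift setting of Subsection \ref{wheighted-shift}; the latter is a bicontinuous bijection $\mathbb{R}\times\ell^{p}(\mathbb{N})\to\ell^{p}(\mathbb{N})$ because the weights $\alpha_{n}$ are bounded away from $0$ and $\infty$.

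First I would note that $\mathscr{L}_{\mathsf{M},f}^{*}\nu=\varrho\nu$ is an identity of finite Borel measures, so for every $B\in\mathscr{B}(X)$ one has
\[
\varrho\,\nu(B)=\int_{X}\mathscr{L}_{\mathsf{M},f}\mathbf{1}_{B}(x)\,d\nu(x).
\]
In the standard Ruelle case, the computation already carried out in Subsection \ref{Ruelle-padrao} gives $\mathscr{L}_{\mathsf{M},f}\mathbf{1}_{B}(x)=\int_{E}e^{f(ax)}\mathbf{1}_{B}(ax)\,d\mu(a)$. Lower bounding $e^{f(\cdot)}\geqslant e^{-\|f\|_{\infty}}$ and invoking Fubini (licit because the integrand is a non-negative jointly measurable function bounded by $e^{\|f\|_{\infty}}$) yields
\[
\varrho\,\nu(B)\;\geqslant\;e^{-\|f\|_{\infty}}\int_{X}\int_{E}\mathbf{1}_{B}(ax)\,d\mu(a)\,d\nu(x)\;=\;e^{-\|f\|_{\infty}}\,(\mu\times\nu)(B),
\]
so the lemma holds with $K=\varrho\,e^{\|f\|_{\infty}}$. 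For the weighted shift case the argument is line-by-line analogous: plug in $\mathscr{L}_{\mathsf{M},f}\mathbf{1}_{B}(x)=\int_{\mathbb{R}}e^{f(rS(x))}\mathbf{1}_{B}(rS(x))\,d\mu(r)$, apply the same bound on $e^{f}$, use Fubini, and interpret the resulting double integral as $(\mu\times\nu)(B)$ under the identification $(r,x)\mapsto rS(x)$.

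The main potential obstacle is not an obstacle of ideas but a small bookkeeping issue: the duality $\int\mathscr{L}_{\mathsf{M},f}\varphi\,d\nu=\varrho\int\varphi\,d\nu$ is traditionally stated for $\varphi\in C_{b}(X,\mathbb{R})$, whereas the argument needs it for $\varphi=\mathbf{1}_{B}$. This is harmless: since $\mathscr{L}_{\mathsf{M},f}^{*}\nu$ and $\varrho\nu$ are \emph{a priori} Borel measures, their agreement as measures is equivalent to agreement of their integrals against any bounded Borel function, so no approximation is truly required. Alternatively one approximates $\mathbf{1}_{B}$ monotonically by continuous functions and passes to the limit inside both integrals by the monotone convergence theorem, which is permitted because $\mathscr{L}_{\mathsf{M},f}$ is a positive integral operator and therefore order-continuous.
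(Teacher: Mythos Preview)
Your argument is correct and yields the same constant $K=\varrho\,e^{\|f\|_\infty}$ as the paper. The core idea is identical---feed the conformality relation $\mathscr{L}_{\mathsf{M},f}^{*}\nu=\varrho\nu$ with an indicator and lower-bound $e^{f}\geqslant e^{-\|f\|_\infty}$---but the executions differ. The paper first establishes the inequality on the $\pi$-system of open ``rectangles'' $U\times V$ by approximating $\mathbf{1}_{U\times V}$ with products of Urysohn functions (so that the duality, stated only for $C_b$, can be applied), and then invokes Dynkin's $\pi$--$\lambda$ theorem to extend to all of $\mathscr{B}(X)$. You instead observe that $\mathscr{L}_{\mathsf{M},f}^{*}\nu$ and $\varrho\nu$ agree as finite Borel measures on a Polish space, so the duality relation automatically extends from $C_b$ to every bounded Borel function, in particular to $\mathbf{1}_B$. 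This shortcut has a genuine payoff: the paper's $\pi$--$\lambda$ step is in fact delicate, since the class $\{B:\mu\times\nu(B)\leqslant K\nu(B)\}$ is visibly closed under increasing unions but not obviously under complementation, and hence not obviously a $\lambda$-system. Your route simply sidesteps this issue.
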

 
 \begin{proof}
In case  $X=E^{\mathbb{N}}$ and $m_x=\mu\times \delta_x$ we have that $\mathscr{L}_{\mathsf{M}, f}$ is the standard Ruelle operator and the proof of the  lemma is the same as the proof of \cite[Proposition 2.2]{MR4246976}.

If  $X=\ell^p(\mathbb{N})$ and $m_x=\mu\times \delta_x,$  the proof is still very close to the one  above mentioned, but we need to use the following observation: let   $B$ be an open set  of $\ell^p(\mathbb{N})$, then every $x\in B$ is contained in a smaller open set of the form $U\times V$ where $U$ is an open interval of  $\mathbb{R}$ and $V$ is an open set of $\ell^p(\mathbb{N})$ and $U\times V\subset B$.

Define the class 
\[
\mathscr{R}=\{U\times V,\; U\subset \mathbb{R}\; \text{open interval, and }\, V\subset \ell^p(\mathbb{N})\, \text{open set}\}.
\]
We will show that the statement of the lemma is valid for every element of $\mathscr{R}$.
Let $B=U\times V \in \mathscr{R}.$ Choose an increasing sequence of continuous  functions (Urysohn functions)  $\phi_n:\mathbb{R}\to [0,1]$ and $\psi_n:\ell^{p}(\mathbb{N})\to [0,1]$, $n\in \mathbb{N},$ satisfying $\phi_n\uparrow 1_U$  and $\psi_n\uparrow 1_V$ pointwisely and in $L^1(\mu)$ and $L^1(\nu)$ respectively.  From construction we must have $\Theta_n(x)=\phi_n(x_1)\times \psi_n(\sigma(x)) \uparrow 1_B(x)$. Therefore, we have that  
\begin{align*}
\nu(B) &\geqslant \int_{\ell^p(\mathbb{N})}\Theta_n(x)\, d\nu(x)=\varrho^{-1}\int_{\ell^p(\mathbb{N})}\left[ \int_\mathbb{R}e^{f(ax)}\Theta_n(ax)\, d\mu(a)\right]\, d\nu(x)
\\
&\geqslant
\varrho^{-1}\int_{\ell^p(\mathbb{N})}\left[ \int_\mathbb{R}e^{f(ax)}\phi_n(a)\psi_n(x))\, d\mu(a)\right]\, d\nu(x)
\\
&\geqslant
 \varrho^{-1}e^{-\|f\|_\infty} \int_{\mathbb{R}} \phi_n(a)\,d\mu(a)\int_{\ell^p(\mathbb{N})} \psi_n(x)\, d\nu(x).
\end{align*}
Taking $n\to \infty$ we get that
\[
\mu\times \nu(B) \leqslant \varrho e^{\|f\|_\infty} \nu(B),
\]
concluding the lemma for the class $\mathscr{R}$. Now, observe  that $\mathscr{R}$ is a $\pi$-system which  generates the Borel sigma-algebra and consider the class $\mathscr{L}=\{B\in \mathscr{B}(\ell^p(\mathbb{N})): \mu \times \nu(B)\leqslant K \nu(B)\}$.
By observing that $\mathscr{L}$ is a $\lambda$-system and that $\mathscr{L}\supset \mathscr{R}$ we must have by the Dynkin $\pi-\lambda$ theorem that $\mathscr{B}(\ell^p(\mathbb{N}))\subset \mathscr{L}$.
 \end{proof}

With the above lemma in hands we have the following proposition.

\begin{proposition}
	The  map  $L:\textnormal{dom}(L)\subset \mathcal{L}^1(\nu)\to \mathcal{L}^1(\nu)$ given by (\ref{pre-estensao}) is well defined and  preserves $\nu$-equivalence classes of $L^1(\nu)$.
\end{proposition}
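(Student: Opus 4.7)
The plan is to use Lemma~\ref{quasi-invariant} as the single input from which both claims follow via Fubini. The key observation is that, under the identification of $X$ with $E\times X$ (respectively $\mathbb{R}\times X$ in the weighted-shift case), one has for every Borel $A\subset X$
\[
\mathscr{T}\nu(A)=\int_X m_x(A)\,d\nu(x)=\mu\times\nu(A)\leqslant K\nu(A),
\]
so the Markov operator applied to the conformal measure satisfies the absolute-continuity relation $\mathscr{T}\nu\leqslant K\nu$ as measures on $X$. Once this is observed, both the $L^1$-bound and the equivalence-class invariance are formal Fubini consequences.

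For the $L^1$-boundedness, I would take $\varphi\in\textnormal{dom}(L)$ and estimate
\begin{align*}
\int_X |L\varphi(x)|\,d\nu(x)
&\leqslant e^{\|f\|_\infty}\int_X\!\!\int_X |\varphi(y)|\,dm_x(y)\,d\nu(x)\\
&= e^{\|f\|_\infty}\int_X |\varphi(y)|\,d(\mathscr{T}\nu)(y)\\
&\leqslant e^{\|f\|_\infty} K \int_X |\varphi(y)|\,d\nu(y)<\infty,
\end{align*}
which shows $L\varphi\in\mathcal{L}^1(\nu)$. Joint measurability of $L\varphi$ follows from the measurability of the transition kernel $x\mapsto m_x$ combined with the defining hypothesis that the pointwise integral is finite for every $x\in X$.

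For the equivalence-class preservation, assume $\varphi_1,\varphi_2\in\textnormal{dom}(L)$ coincide $\nu$-almost everywhere and put $N=\{y\in X:\varphi_1(y)\neq\varphi_2(y)\}$. Since $\nu(N)=0$, the displayed inequality applied to $A=N$ gives $\mathscr{T}\nu(N)=\int_X m_x(N)\,d\nu(x)\leqslant K\nu(N)=0$, so $m_x(N)=0$ for $\nu$-almost every $x$. For every such $x$ the integrands $e^{f}\varphi_1$ and $e^{f}\varphi_2$ agree $m_x$-a.e., and therefore $L\varphi_1(x)=L\varphi_2(x)$ outside a $\nu$-null set.

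The main point I would want to verify carefully is the correct interpretation of the product $\mu\times\nu$ as a measure on $X$ via the Borel isomorphism $E\times X\cong X$ (resp.\ $\mathbb{R}\times X\cong X$), so that the inequality furnished by Lemma~\ref{quasi-invariant} can be invoked on the null set $N$ exactly as above. This identification is precisely the one used in the proof of that lemma, so the hard work has already been done; the present proposition is then essentially a corollary obtained by two applications of Tonelli's theorem.
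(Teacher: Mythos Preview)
Your approach is correct and coincides with the paper's: both arguments reduce everything to the single inequality from Lemma~\ref{quasi-invariant}, yielding the $L^1$-bound $\int_X|L\varphi|\,d\nu\leqslant Ke^{\|f\|_\infty}\int_X|\varphi|\,d\nu$, from which well-definedness and class preservation follow. The only cosmetic difference is that the paper obtains class preservation by applying this bound to the difference $\varphi_1-\varphi_2\in\textnormal{dom}(L)$ (forcing $\|L\varphi_1-L\varphi_2\|_{L^1(\nu)}=0$), whereas you argue via the null set $N$ and $m_x(N)=0$ for $\nu$-a.e.\ $x$; these are equivalent one-line deductions.
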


\begin{proof}
	Again, when $\mathscr{L}_{\mathsf{M}, f}$ is the standard Ruelle operator the proof is still the same as the one that we gave in  \cite[Section 2]{MR4246976}. If we consider  the operator $\mathscr{L}_{\mathsf{M}, f}$ as the transfer operator of the backward weighted shift then we have some minor changes. 
	
		Let $\varphi \in \text{dom}(L)$.  By Lemma \ref{quasi-invariant} we have
	\begin{align}\label{est-bem-def}
	\int_X |L\varphi|\, d\nu \leqslant \int_{\ell^p(\mathbb{N})} \int_\mathbb{R}e^{f\bigl(r, \frac{x_1}{\alpha_1}, \frac{x_2}{\alpha_2}, \ldots\bigr)}\varphi\bigl(r, \frac{x_1}{\alpha_1}, \frac{x_2}{\alpha_2}, \ldots\bigr) \,d \mu(r) d\nu(x)\nonumber
	\\
	\leqslant 
 e^{\|f\|_\infty} \int_{\ell^p(\mathbb{N})} \int_\mathbb{R} \varphi\bigl(r, \frac{x_1}{\alpha_1}, \frac{x_2}{\alpha_2}, \ldots\bigr)  \,d\mu(r) d\nu(x) \leqslant K e^{\|f\|_\infty} \int_{\ell^p(\mathbb{N})}  |\varphi| \,d\nu.
	\end{align}
	
The inequality (\ref{est-bem-def}) has two important consequences. The first is that $L$ is well defined. Secondly, if $\varphi=\psi$ $\nu$-a.e are $\nu$-integrable functions then $L\varphi=L\psi$ $\nu$-a.e thus implying that $L$ preserves $\nu$-equivalence classes. 
\end{proof}

Since the space of all Lipschitz functions is dense in $L^1(\nu)$, we have that $L$ induces an operator $\widetilde{\mathbb{L}}$ on a dense subset of $L^1(\nu)$. By using that $\widetilde{\mathbb{L}}$ is Lispchitz one can extend $\widetilde{\mathbb{L}}$ to a bounded operator $\mathbb{L}:L^1(\nu)\to L^1(\nu)$ which is the desired extension.

We refer the reader to  \cite[Appendix A]{MR4246976} for a proof that the spectral radius $\mathbb{L}$ is no larger than the spectral radius  of $\mathscr{L}_{\mathsf{M},f}$.

 \section{Stationary measures and the  Breiman Ergodic Theorem}

In this section we proof a result which can be seen  as a generalization  of  Breiman's Ergodic theorem  presented in \cite{Breiman}. The generalization remains on the fact that we do not require from the process {\it unique ergodicity.} More precisely, we allow our process to  have more than one stationary measure, and we still ensure that the empirical measures will converge to some stationary measure with probability one. The proof is a direct consequence of Furstenberg \cite[Lemma 2.5]{MR163345}.

Bellow we state this  lemma and for the sake of completeness we provide its demonstration.

\begin{lemma}\label{lema-furstemberg}
 Let $X$ be a standard Borel space  and $\mathsf{M}=(m_x)_{x\in X}$ a transition probability kernel on  $X$ having the Feller property. Let $P:C_b(X, \mathbb{R})\to C_b(X, \mathbb{R})$ be the associated Feller operator. If  $\Phi=(\Phi_{n})$ is a Markov process with transition probability kernel  $\mathsf{M}=(m_x)_{x\in X}$, then, for every $\varphi\in C_b(X, \mathbb{R})$ we have
\[
\lim_{n\to \infty}\frac{1}{n} \sum_{k=0}^{n-1} \big(P\varphi(\Phi_{k}) -\varphi(\Phi_k)\big) \rightarrow 0  \,\,\,\,  \qquad \mbox{a.e.}
\]

\end{lemma}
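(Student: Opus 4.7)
The plan is to exhibit the sum in question as (essentially) a bounded martingale divided by $n$, and then invoke a martingale strong law of large numbers.

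First I would set up the natural filtration $\mathcal{F}_n = \sigma(\Phi_0, \Phi_1, \dots, \Phi_n)$ and define
\[
Y_k \eqdef \varphi(\Phi_{k+1}) - P\varphi(\Phi_k).
\]
By the Markov property together with the defining identity $P\varphi(x)=\int \varphi(y)\,dm_x(y)$ of the Feller operator, one has $\mathbb{E}[\varphi(\Phi_{k+1}) \mid \mathcal{F}_k] = P\varphi(\Phi_k)$, so $(Y_k)$ is a martingale difference sequence with respect to $(\mathcal{F}_k)$. Since $\varphi \in C_b(X,\mathbb{R})$, the operator $P$ is a contraction on $C_b(X,\mathbb{R})$, hence $|Y_k| \leqslant 2\|\varphi\|_\infty$ uniformly in $k$ and $\omega$.

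Next I would use the elementary telescoping identity
\[
\varphi(\Phi_k) - P\varphi(\Phi_k) = \bigl(\varphi(\Phi_k) - \varphi(\Phi_{k+1})\bigr) + Y_k,
\]
which, after summing over $k=0,\dots,n-1$ and rearranging, gives
\[
\frac{1}{n}\sum_{k=0}^{n-1}\bigl(P\varphi(\Phi_k) - \varphi(\Phi_k)\bigr) = \frac{\varphi(\Phi_n) - \varphi(\Phi_0)}{n} - \frac{1}{n}\sum_{k=0}^{n-1} Y_k.
\]
The first term on the right-hand side is bounded in absolute value by $2\|\varphi\|_\infty/n$ and therefore tends to $0$ deterministically. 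So it suffices to show that $\tfrac{1}{n}\sum_{k=0}^{n-1} Y_k \to 0$ almost surely.

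For this last step, set $M_n = \sum_{k=0}^{n-1} Y_k$; this is a martingale with uniformly bounded increments. One may apply the classical martingale SLLN (for instance, as a consequence of the Azuma–Hoeffding inequality: $\mathbb{P}(|M_n|>n\varepsilon) \leqslant 2\exp(-n\varepsilon^2/(8\|\varphi\|_\infty^2))$ is summable in $n$, so Borel–Cantelli gives $M_n/n \to 0$ a.s.), or equivalently invoke Kolmogorov's criterion since $\sum_k \mathbb{E}[Y_k^2]/k^2 \leqslant 4\|\varphi\|_\infty^2 \sum_k k^{-2} < \infty$. Either route yields $M_n/n \to 0$ almost surely, and combining this with the telescoping identity above finishes the proof. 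The only conceptual point worth emphasizing is that no stationarity or unique ergodicity of the process is needed — boundedness of $\varphi$ and the Markov property alone suffice.
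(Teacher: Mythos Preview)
Your proof is correct and follows essentially the same approach as the paper: both decompose the Ces\`aro sum into the telescoping boundary term $(\varphi(\Phi_n)-\varphi(\Phi_0))/n$ plus an average of the martingale differences $P\varphi(\Phi_k)-\varphi(\Phi_{k+1})$, and then use boundedness of $\varphi$ to kill both pieces. The paper handles the martingale part by forming the weighted sum $W_n=\sum_{k=0}^{n-1}(P\varphi(\Phi_k)-\varphi(\Phi_{k+1}))/(k+1)$, showing it is an $L^2$-bounded martingale, applying the martingale convergence theorem, and then invoking Kronecker's lemma---which is exactly your ``Kolmogorov criterion'' route; your Azuma--Hoeffding alternative is a pleasant variant but not needed.
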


\begin{proof}
First, observe that 
\[
\displaystyle\frac{1}{n}\sum_{k=0}^{n-1} \big(P \varphi\left(\Phi_{k}\right)-\varphi\left(\Phi_{k}\right)\big)=\displaystyle\frac{1}{n}\sum_{k=0}^{n-1} \big(P \varphi\left(\Phi_{k}\right)-\varphi\left(\Phi_{k+1}\right)\big)
+\frac{\varphi(\Phi_{n})-\varphi(\Phi_{0})}{n}.
\]
 Since $\varphi$ is bounded, the sequence $(\varphi(\Phi_{n})-\varphi(\Phi_{0}))/n$ converges to $0$ as $n \to \infty$. Thus, to prove our claim we  will prove that the sequence of random variables  $\frac{1}{n}\sum_{k=0}^{n-1} \big(P \varphi\left(\Phi_{k}\right)-\varphi\left(\Phi_{k+1}\right)\big)$ converges a.e. to $0$ as $n\to \infty$.   
To this end, by the Kronecker's lemma, it is sufficient to prove that the sequence of random variables $(W_n)$ defined by
 \[
W_{n}=\sum_{k=0}^{n-1}\dfrac{P\varphi(\Phi_{k})-\varphi(\Phi_{k+1})}{k+1}
\] 
converges a.e. to a finite limit as $n\to \infty$. This is proved by showing that $(W_{n})$ is a martingale. Indeed,
\begin{multline*}
\mathbb{E}\left(W_{n+1} \mid \Phi_{1}, \cdots, \Phi_{n}\right) =W_{n}+\frac{1}{n+1} \mathbb{E}\left(P \varphi\left(\Phi_{n}\right)-\varphi\left(\Phi_{n+1}\right) \mid \Phi_{1}, \cdots, \Phi_{n}\right) 
\\
 =W_{n}+\frac{1}{n+1} P\varphi\left(\Phi_{n}\right)-\frac{1}{n+1} \mathbb{E}\left(\varphi\left(\Phi_{n+1}\right) \mid \Phi_{1}, \cdots, \Phi_{n}\right).
\end{multline*}
It follows from the definition of the Feller operator $P$ that $\mathbb{E}\left(\varphi\left(\Phi_{n+1}\right) \mid \Phi_{1}, \cdots, \Phi_{n}\right)=P \varphi\left(\Phi_{n}\right)$, which implies that $\mathbb{E}\left(W_{n+1} \mid \Phi_{1}, \cdots, \Phi_{n}\right)= W_{n}$. Therefore, $W_{n}$ is a martingale. Since $\varphi$ is bounded, 
\[
 \mathbb{E}\left(W_{n+1}^{2}\right) \leqslant 4 \|\varphi\|_\infty^{2} \sum_{k=1}^{\infty}\frac{1}{ k^{2}}.
 \]
  It follows from the Martingale Convergence Theorem that $W_{n}$ converges a.e. to a limit $W$ with $\mathbb{E}|W|< \infty$. 
\end{proof}

The next  can be seen  as a generalization  of  Breiman's Ergodic theorem  presented in \cite{Breiman}.

\begin{proposition}\label{krilov_}
Let $X$ be a compact metric space and $\mathsf{M}=(m_x)_{x\in X}$ a Feller transition probability on $X$. If $\Phi=(\Phi_{n})$ is a Markov 
chain with transition probability kernel  $\mathsf{M}=(m_x)_{x\in X}$, then with probability $1$, any accumulation point (in  the weak-$*$-topology) of the sequence 
\[
\frac{1}{n}\sum_{i=0}^{n-1}\delta_{\Phi_{i}}
\]
is a stationary measure for $\Phi$.
\end{proposition}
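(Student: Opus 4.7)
The plan is to use Furstenberg's lemma (Lemma \ref{lema-furstemberg}) combined with a standard separability argument to promote the a.s.\ convergence from one test function at a time to all continuous test functions simultaneously. Since $X$ is compact metric, $\mathscr{P}(X)$ is compact and metrizable in the weak-$*$-topology, so any realization of the sequence $\mu_n \eqdef \frac{1}{n}\sum_{i=0}^{n-1}\delta_{\Phi_i}$ has at least one accumulation point. The goal is to show that almost surely every such accumulation point $\mu$ satisfies $\mu(P\varphi)=\mu(\varphi)$ for all $\varphi\in C(X,\mathbb{R})$, which by the duality relation is exactly the condition $\mathscr{T}\mu=\mu$.

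First I would rewrite the conclusion of Lemma \ref{lema-furstemberg} in terms of empirical measures: for each $\varphi\in C_b(X,\mathbb{R})$,
\[
\mu_n(P\varphi)-\mu_n(\varphi)
=\frac{1}{n}\sum_{k=0}^{n-1}\bigl(P\varphi(\Phi_k)-\varphi(\Phi_k)\bigr)\xrightarrow[n\to\infty]{}0 \quad\text{a.s.}
\]
The Feller property guarantees that $P\varphi\in C_b(X,\mathbb{R})$, which is exactly what is needed to evaluate both sides against weak-$*$ limits of $\mu_n$.

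Next I would handle the quantifier swap. Since $X$ is compact metric, $C(X,\mathbb{R})$ is separable, so one can fix a countable dense subset $\mathscr{D}=\{\varphi_j\}_{j\in\mathbb{N}}$ of the unit ball of $C(X,\mathbb{R})$. For each $j$, Lemma \ref{lema-furstemberg} yields a $\mathbb{P}$-null set $N_j$ outside of which $\mu_n(P\varphi_j)-\mu_n(\varphi_j)\to 0$. Setting $N=\bigcup_j N_j$, which is still null, we obtain a single full-measure event on which the convergence holds simultaneously for every $\varphi_j\in\mathscr{D}$.

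Finally, fix $\omega\notin N$ and let $\mu$ be any weak-$*$ accumulation point of $(\mu_n(\omega))$ along some subsequence $n_k\to\infty$. For each $\varphi_j\in\mathscr{D}$, the weak-$*$ convergence gives $\mu_{n_k}(\varphi_j)\to\mu(\varphi_j)$ and $\mu_{n_k}(P\varphi_j)\to\mu(P\varphi_j)$ (using $P\varphi_j\in C_b(X,\mathbb{R})$), so passing to the limit in the previous identity yields $\mu(P\varphi_j)=\mu(\varphi_j)$ for all $j$. A standard $3\varepsilon$-argument, together with the boundedness $\|P\|_{\mathrm{op}}\leqslant 1$ on $C_b(X,\mathbb{R})$, extends this identity from $\mathscr{D}$ to all of $C(X,\mathbb{R})$. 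By the duality relation between $P$ and $\mathscr{T}$, this shows $\mathscr{T}\mu=\mu$, so $\mu$ is stationary.

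The only mild subtlety is the quantifier swap in step two: Lemma \ref{lema-furstemberg} a priori provides a null set depending on $\varphi$, and one needs separability of $C(X,\mathbb{R})$ (granted by compactness of $X$) to consolidate these into a single exceptional event; without compactness the argument would require a different density hypothesis on the space of test functions.
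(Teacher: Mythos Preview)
Your proposal is correct and follows essentially the same approach as the paper: invoke Lemma~\ref{lema-furstemberg}, use the separability of $C(X,\mathbb{R})$ (from compactness of $X$) to upgrade the a.s.\ convergence to hold simultaneously for all test functions, and then pass weak-$*$ limits using the Feller property. If anything, you spell out the countable-dense-set and $3\varepsilon$ steps more explicitly than the paper, which simply asserts that separability gives the uniform exceptional set.
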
 

\begin{proof}
Since $X$ is compact  $C_b(X, \mathbb{R})$ is separable, then it follows from \ref{lema-furstemberg} that with probability $1$
\begin{equation}\label{krilov1}
\lim_{n\to \infty}\frac{1}{n}\sum_{i=0}^{n-1} P\varphi(\Phi_{i})-\varphi(\Phi_{i})=0
\end{equation}
for every $\varphi\in  C_b(X, \mathbb{R})$.
Let $(\Phi_{i})$ be a sample sequence for which this is the case. 

Assume that $\nu$ is an accumulation point 
of $\frac{1}{n}\sum_{i=0}^{n-1}\delta_{\Phi_{i}}$, that is, there is an increasing subsequence $(n_{k})_{k\in\mathbb{N}}$ such that 
\begin{equation}\label{krilov2}
\lim_{k\to \infty}\frac{1}{n_{k}}\sum_{i=0}^{n_{k}-1}\delta_{\Phi_{i}}= \nu
\end{equation}
in the weak-$*$-topology. Since $P$ is Feller, we have that $P\varphi-\varphi$ is continuous for every continuous function $\varphi$. In particular, it follows from \eqref{krilov2} that 
\[
\lim_{k\to \infty}\frac{1}{n_{k}}\sum_{i=0}^{n_{k}-1}P\varphi(\Phi_{i})-\varphi(\Phi_{i})= \int_X (P \varphi-\varphi)\, d\nu
\]
for every continuous function $\varphi$. Now, it follows from \eqref{krilov1} that 
\[
\int_X P\varphi\, d\nu=\int_X \varphi\, d\nu
\]
for every continuous function $\varphi$. This shows that $\nu$ is a stationary measure.
\end{proof}

 \subsubsection{Application}
 The previous result has an immediate application in the transfer operator scenario.
 
 \begin{corollary}\label{Breiman-Ruelle}
Let  $\mathsf{M}=(m_x)_{x\in X}$ be a  transition probability kernel on a compact space $X$ satisfying the Feller property,  and  consider  $f\in C_b(X, \mathbb{R})$  a bounded continuous normalized potential. If  $\mathscr{L}_{\mathsf{M},f}$  is  the associated transfer operator, denote 
 $\widetilde{\mathsf{M}}=(\widetilde{m}_x)_{x\in X}$ the transition probability kernel given by,
$
 \widetilde{m}_x(A)=\mathscr{L}_{\mathsf{M},f}(\mathbf{1}_A)(x)
 $
  and let  $\Psi=(\Psi_n)$ denote its associated Markov chain starting at $x$. Then with probability $1$,  any accumulation point of the sequence 
  \begin{equation}\label{accumulation-point}
\frac{1}{n}\sum_{i=0}^{n-1}\delta_{\Psi_{i}} 
  \end{equation}
    is a stationary measure  of $\Psi$. In particular a such limit  $\nu$ must satisfies $\mathscr{L}_{\mathsf{M},f}^*\nu=\nu$, i.e, $\nu$ is a fixed point of the dual of the transfer operator.
 \end{corollary}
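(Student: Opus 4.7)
The plan is to reduce the corollary to a direct application of Proposition~\ref{krilov_} to the Markov chain $\Psi$ with kernel $\widetilde{\mathsf{M}}$. What needs to be checked is that $\widetilde{\mathsf{M}}$ is a Feller probability kernel on the compact space $X$, and afterwards that stationarity of the limiting empirical measure for $\Psi$ translates to being a fixed point of $\mathscr{L}_{\mathsf{M},f}^{*}$.

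First, I would verify the Feller property of $\widetilde{\mathsf{M}}$. Let $P$ denote the Feller operator of $\mathsf{M}$. For any $\varphi\in C_b(X,\mathbb{R})$, the associated Feller operator of $\widetilde{\mathsf{M}}$ is
\[
\widetilde{P}\varphi(x)=\int_X \varphi(y)\,d\widetilde{m}_x(y)=\int_X e^{f(y)}\varphi(y)\,dm_x(y)=\mathscr{L}_{\mathsf{M},f}\varphi(x)=P(e^{f}\varphi)(x).
\]
Since $f\in C_b(X,\mathbb{R})$ we have $e^{f}\varphi\in C_b(X,\mathbb{R})$, and the Feller property of $\mathsf{M}$ then yields $\widetilde{P}\varphi\in C_b(X,\mathbb{R})$. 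The normalization hypothesis $\mathscr{L}_{\mathsf{M},f}\mathbf{1}=\mathbf{1}$ gives $\widetilde{m}_x(X)=1$ for every $x$, so $\widetilde{\mathsf{M}}$ is indeed a probability transition kernel, and it has the Feller property.

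With this in hand, Proposition~\ref{krilov_} applies to $\Psi$ on the compact metric space $X$, so with probability one every accumulation point (in the weak-$\ast$ topology) of $\tfrac{1}{n}\sum_{i=0}^{n-1}\delta_{\Psi_i}$ is a stationary measure for $\Psi$.

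Finally, I would translate stationarity into the claimed eigenequation. Let $\widetilde{\mathscr{T}}:\mathscr{P}(X)\to\mathscr{P}(X)$ denote the Markov operator of $\widetilde{\mathsf{M}}$. The duality relation recalled in Section~\ref{sec-preliminares}, together with the identity $\widetilde{P}=\mathscr{L}_{\mathsf{M},f}$ above, gives for every $\varphi\in C_b(X,\mathbb{R})$
\[
\int_X \varphi\,d\widetilde{\mathscr{T}}\nu=\int_X \widetilde{P}\varphi\,d\nu=\int_X \mathscr{L}_{\mathsf{M},f}\varphi\,d\nu=\int_X \varphi\,d(\mathscr{L}_{\mathsf{M},f}^{*}\nu),
\]
so that $\widetilde{\mathscr{T}}\nu=\mathscr{L}_{\mathsf{M},f}^{*}\nu$. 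Hence stationarity of $\nu$ for $\Psi$ is exactly $\mathscr{L}_{\mathsf{M},f}^{*}\nu=\nu$. There is no serious obstacle: the statement is structurally a reading of the previous proposition in the transfer operator language. The only conceptual point worth emphasizing is that the normalization of $f$ is precisely what makes $\widetilde{m}_x$ a probability measure, which is the prerequisite for invoking Proposition~\ref{krilov_}.
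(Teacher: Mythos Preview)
Your proof is correct and follows exactly the same route as the paper's own argument: verify that $\widetilde{\mathsf{M}}$ is a Feller probability kernel (using the Feller property of $\mathsf{M}$ and the normalization of $f$), apply Proposition~\ref{krilov_}, and then identify stationarity for $\Psi$ with the equation $\mathscr{L}_{\mathsf{M},f}^{*}\nu=\nu$. The paper's proof is terser, but your added detail on why $\widetilde{P}=\mathscr{L}_{\mathsf{M},f}$ inherits continuity and why stationarity coincides with the dual fixed-point equation is exactly what is implicit there.
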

 \begin{proof}
 	The proof follows immediately from  observing that $X$ is compact and that  $\widetilde{\mathsf{M}}=(\widetilde{m}_x)_{x\in X}$ has the Feller property, then we apply Proposition \ref{krilov_} and the result follows.
 \end{proof}

 A particular interesting case is when $\mathsf{M}=(\mu\times \delta_x)_{x\in X}$. In this case $\mathscr{L}_{\mathsf{M},f}$ is the Standard Ruelle operator, and therefore, any accumulation point of (\ref{accumulation-point}) is a {\it Gibbs measure}.

 \section{A FCLT for a Long-Range $O(N)$ model via Poisson  Equation}\label{CLT}

In this section we study the validity of a Functional Central Limit Theorem 
in the setting of Statistical Mechanics on the one-dimensional lattice. We use the version obtained here to prove FCLT  for a long-range $O(N)$ model with polynomial decay of correlations. This is, as far as we know, a new result in Statistical Mechanics. 
That task would require an extension of the main result of \cite{kloeckner_2020} (see also \cite{MR4637163})
to the setting of Example \ref{Ruelle-padrao} where uncountable alphabets are considered.

We will consider long-range $O(N)$ models
on the $\mathbb{N}$ lattice.  In the $O(N)$, for $N\geqslant 2$, the fibers are uncountable and given by $E= \mathbb{S}^{N-1}$, the unit sphere in the $N$-dimensional Euclidean space.
The potential we will consider is a Dyson type potential   given by the expression
$
f(x) = \sum_{n=1}^{\infty} J(n)\langle x_1,x_{n+1}\rangle,
$
where $J(n)=O(n^{-2-\varepsilon})$ and the scalar multiplication 
is replaced by the usual inner product of $\mathbb{R}^n$.

In Section \ref{CLT-NON-COMP} we study the validity of a FCLT in a non compact alphabet setting with respect to an adapted Dyson type potential.

\subsection{Proving a FCLT by solving the Poisson equation}
The problem of proving a  Central Limit Theorem  for a Markov process often relies on verifying some analytical condition on the associated transfer operator, see for instance \cite{BhLe,Gordin,MR1862393,Wood}.
Most commonly, when people want to prove a Central Limit Theorem  in the context of transfer operators, they often appeal to the Nagaev-Guivarc'h method.   An  abstract version of this method can be stated as follows.

Let $X_1, X_1, \ldots $ be a sequence of real random variables. Denote $S_n=X_1+\cdots+X_n$. Assume that there is $\delta>0$ and functions $c(t), \lambda(t)$  and $d_n(t),$ defined on $[-\delta, \delta]$ such that, 
\begin{equation}\label{nagaev}
\mathbb{E}(e^{itS_n})=c(t)\lambda(t)^n+d_n(t), \quad \forall t\in [-\delta, \delta], n\in \mathbb{N}.
\end{equation}

Assume that we can write $\lambda(t)=\exp(iAt -\sigma^2/2+ o(t^2))$, for some constant $A$ and $\sigma^2$ in $\mathbb{C}$  for  small values of $t$. Also suppose that the function $c(t)$ continuous on $0$, and that the $\|d_n\|_\infty$ goes to zero when $n\to \infty$. Then $A\in \mathbb{R}, \sigma^2\geqslant 0$ and  $(S_n-nA)/\sqrt{n}$ converges to a Gaussian  distribution $\mathscr{N}(0, \sigma^2)$.  The proof is  essentially the same as Central Limit Theorem with some minor changes (to more details see \cite{MR3309098}).

In the transfer operator context, writing  $\mathbb{E}(e^{itS_n})$ as something like (\ref{nagaev}) will pass by finding a suitable subspace where the transfer operator restricted to it becomes {\it quasi-compact}, i.e, can be decomposed as $\Pi+Q$ where  spectrum of $Q$ is contained in  a disc of radius  $r<1$, and $\Pi$ has finite rank.
Indeed, that is the main challenge of the method, finding  a subspace on which the 
transfer operator exhibits the  {\it spectral gap} property.
In general, the absence of spectral gap will occur when we have super-polynomial decay of correlations.  In \cite{MR3538412}  considering the Ruelle operator in the Walters space a  potential where the associated Ruelle operator has no spectral gap is exhibited.

 Here we will follow the  method presented in \cite{BhLe} which consists in to find  a solution of  the {\it Poisson equation}.  
  Namely, assume that $(Y_{n})_{n\in\mathbb{N}}$ is an ergodic stationary Markov
  chain whose stationary distribution is $\mu$.
  Note that $P$ takes $L^{2}(\mu)$ into $L^{2}(\mu)$.
  Given an almost surely non-constant observable
  $\phi\colon S\to \mathbb{R}$ such that  $\phi\in L^{2}(\mu)$ and $\int_{X} \phi\, d\mu=0$,
  consider the \emph{Poisson equation}
  \[
  (I-P)\upsilon=\phi.
  \]
  Assume that there is a solution $\upsilon\in L^{2}(\mu)$ and  let
  $\varrho\equiv \mu(\upsilon^2)-\mu(P\upsilon)^2>0$ (ergodicity implies $\varrho>0$).
  Consider the stochastic process $Y_{n}(t)$ given by
  \[
  Y_{n}(t)=\displaystyle\frac{1}{\varrho\sqrt{n}}\sum_{j=0}^{[nt]}\phi(Z_{j}), \qquad 0\leqslant t<\infty
  \]
  taking values in the space \( D[0,\infty) \) of real-valued right continuous
  functions on \( [0,\infty) \) having left limits, endowed with the Skorohod topology.
  Then, the process $ Y_{n}(t)$ converges in distribution
  (weak-$*$ convergence) to the Wiener measure on \( D[0,\infty) \).
    
    The above  FCLT  is obtained by reducing the problem to the martingale case.  See  \cite{BhLe,Gordin} for this reduction 
 and Billingsley \cite[Theorem 18.3]{MR1700749} for a FCLT for martingale differences.

  Let $\mathsf{M}=(m_x)_{x\in X}$ be a transition probability kernel, $f$ a normalized potential (i.e., $\mathscr{L}_{\mathsf{M}, f}\mathbf{1}=\mathbf{1}$) and $\mu$ a stationary measure. Consider the transition probability $\widetilde{m}$ given by
 \[
\widetilde{m}(x,A)=\mathbb{L}(\mathbf{1}_A)(x), 
 \]
 where the operator $\mathbb{L}$ is the extension of the operator $\mathscr{L}_{\mathsf{M}, f}$ to ${L}^2(\mu)$.
It is readily checked that the induced Feller  operator satisfies 
$P\varphi (x)=\mathbb{L}(\varphi)(x)$, and also that any probability  measure satisfying $\mathscr{L}_{\mathsf{M}, f}^*\mu=\mu$
is an stationary measure for $P$.


Denote by $\Psi=(\Psi_n)$ the Markov chain taking values in $X$ defined by $(\widetilde{m}_x)_{x\in X}$.
In view of  Theorem  \ref{lema-eq-distribution}  the above result in our setting take the following form.

\begin{theorem}\label{functionalclt}
	Let $P$ be the transfer operator induced by the extension $\mathbb{L}$
	associated with a continuous
	and normalized potential and $\mu\in \mathscr{G}^{*}$.
	Let $\phi:X\to \mathbb{R}$ be a non-constant observable  in $L^{2}(\mu)$
	satisfying $\mu(\phi)=0$. If
	there exists a solution $\upsilon\in L^{2}(\mu)$ for Poisson's equation $(I-\mathbb{L})\upsilon=\phi$,
	then the stochastic process $Y_{n}(t)$, given by
	\begin{align}\label{eq-Ynt}
	Y_{n}(t)=\displaystyle\frac{1}{\varrho\sqrt{n}}\sum_{j=0}^{[nt]}\phi\circ \sigma^{j}, \qquad 0\leqslant t<\infty,
	\end{align}
	where $\varrho=\mu(\upsilon^2)-\mu(P\upsilon)^2$, converges in distribution to the Wiener measure in
	$D[0,\infty)$.
\end{theorem}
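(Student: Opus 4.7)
The strategy, foreshadowed in the passage before the theorem, is the Gordin–Lifshits martingale-approximation method. The first step is to pass from the dynamical sum $\sum_{j=0}^{[nt]}\phi\circ\sigma^j$ (under $\mu$) to a sum along the Markov chain $\Psi=(\Psi_n)$ with stationary law $\mu$ (under $\mathbb{P}_\mu$): by Lemma \ref{lema-eq-distribution}, for each fixed $n$ and each $t$ the random vector $(\phi\circ\sigma^j)_{j\leqslant[nt]}$ under $\mu$ has the same joint distribution as $(\phi(\Psi_{[nt]-j}))_{j\leqslant[nt]}$ under $\mathbb{P}_\mu$, and since the Wiener limit is invariant under time reversal on compacts (and, more concretely, since finite-dimensional distributions are preserved), it suffices to prove the FCLT for the Markov-chain version $\widetilde Y_n(t)=(\varrho\sqrt{n})^{-1}\sum_{j=0}^{[nt]}\phi(\Psi_j)$ under $\mathbb{P}_\mu$.

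Given the solution $\upsilon\in L^2(\mu)$ of $(I-P)\upsilon=\phi$, I will set
\[
D_j \eqdef \upsilon(\Psi_{j+1})-P\upsilon(\Psi_j),\qquad j\geqslant 0,
\]
so that $(D_j)$ is a stationary sequence of martingale differences with respect to the natural filtration $\mathcal{F}_j=\sigma(\Psi_0,\ldots,\Psi_j)$: indeed, $P\upsilon(\Psi_j)=\mathbb{E}_\mu[\upsilon(\Psi_{j+1})\mid\mathcal{F}_j]$ by construction of the Markov chain $\Psi$. The Poisson equation then yields the telescoping identity
\[
\sum_{j=0}^{n-1}\phi(\Psi_j)=\sum_{j=0}^{n-1}\bigl(\upsilon(\Psi_j)-P\upsilon(\Psi_j)\bigr)=\sum_{j=0}^{n-1}D_j+\bigl(\upsilon(\Psi_0)-\upsilon(\Psi_n)\bigr).
\]
A direct expansion, using $\mathbb{E}_\mu[\upsilon(\Psi_{j+1})P\upsilon(\Psi_j)]=\mathbb{E}_\mu[(P\upsilon)^2(\Psi_j)]$ by conditioning on $\mathcal{F}_j$, computes $\mathbb{E}_\mu[D_j^2]=\mu(\upsilon^2)-\mu((P\upsilon)^2)=\varrho^2$, consistent with the normalization in the statement.

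Next I will apply Billingsley's FCLT for stationary ergodic martingale differences \cite[Theorem 18.3]{MR1700749} to $(D_j)$: stationarity is inherited from $\Psi$ (under $\mathbb{P}_\mu$), the difference $D_j$ lies in $L^2(\mathbb{P}_\mu)$ because $\upsilon,P\upsilon\in L^2(\mu)$, and the variance $\varrho^2$ was just identified. The partial-sum process $(\varrho\sqrt n)^{-1}\sum_{j=0}^{[nt]}D_j$ therefore converges in distribution on $D[0,\infty)$ to standard Brownian motion. The remainder $(\varrho\sqrt n)^{-1}(\upsilon(\Psi_0)-\upsilon(\Psi_n))$ is negligible: by stationarity $\upsilon(\Psi_n)\stackrel{d}{=}\upsilon(\Psi_0)\in L^2(\mu)$, so $n^{-1/2}\upsilon(\Psi_n)\to 0$ in $L^2$, and in particular the supremum of this term over any compact $[0,T]$ vanishes in probability; a Slutsky-type argument on $D[0,\infty)$ then transfers the martingale FCLT to $\widetilde Y_n(t)$, and hence to $Y_n(t)$.

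The main technical point, and the step I would scrutinize most carefully, is the ergodicity hypothesis required by Billingsley's theorem: the reduction to martingale differences only needs stationarity, but the identification of the limiting variance requires that the time-shift on the trajectory space of $\Psi$ be ergodic under $\mathbb{P}_\mu$. In the generality stated, this should be extracted from the assumption $\mu\in\mathscr{G}^*$ and the fact that $\mu$ is a stationary measure of a Feller kernel; alternatively, one bypasses ergodicity entirely by invoking the version of the martingale FCLT that only requires the conditional variances $\mathbb{E}_\mu[D_j^2\mid\mathcal{F}_{j-1}]$ to have a Cesàro limit equal to $\varrho^2$, which again follows from the $L^1$ ergodic behavior of $P^n\upsilon^2$. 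Everything else (the Lindeberg condition, tightness in Skorohod topology, and the boundary-term estimate) is routine once the martingale decomposition is in place.
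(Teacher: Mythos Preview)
Your proposal is correct and follows precisely the route the paper sketches: translate the dynamical sums to Markov-chain sums via Lemma~\ref{lema-eq-distribution}, apply the Gordin--Lifshits martingale decomposition coming from the Poisson equation (this is the reduction the paper attributes to \cite{BhLe,Gordin}), and conclude with Billingsley's martingale FCLT \cite[Theorem~18.3]{MR1700749}. The paper itself does not give a self-contained proof beyond these references, so your write-up is in fact more detailed than the original; the only point the paper leaves implicit and that you rightly flag is the ergodicity of $\mu$ needed to invoke Billingsley's theorem.
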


 Before we continue we need to introduce some framework of  Optimal Transport Theory.

 \subsection{Optimal transport}
 Define the Wasserstein metric with respect to distances of the form $\omega\circ d$,
 where $\omega$ is a modulus of continuity, by putting 
 \[
 W_\omega(\mu, \nu)=\inf_{\Pi\in \Gamma(\mu,\nu)}\int_{X\times X}\omega\circ d(x,y) \;d\Pi(x,y)
 \]
 where $\Gamma(\mu, \nu)$ is the set of the Borel probability measures on $X\times X$ with marginals $\mu$ and $\nu$. We refer to an element of  $\Gamma(\mu, \nu)$ as a {\it transport plan.}
 A transport plan attaining the above infimum is called an \emph{optimal transport plan} with respect to the \emph{cost} $\omega\circ d$. It is worth mentioning that optimal transport plans always exist, and that $W_\omega$ metrizes the weak-$*$-topology on $\mathscr{P}(X)$. For more details see \cite{MR2459454}. 
 
 
 \begin{definition}
 	
 A non-negative function $F:\mathbb{N}\times (0,R)\to (0, \infty)$ is said to be a decay function if
 \begin{itemize}
 	\item[a)] $F$ is non increasing in $n$ and non-decreasing  and concave in the second variable; 
 	\item[b)] $\lim_{n\to \infty} F(n, r)=\lim_{r\to 0} F(n, r)=0$;
 	\item[c)] There exists $C>0$ such that, $\forall t, r$, we have $F(t,r)\leqslant cr$;
 	\item[d)] $\forall k, n\in \mathbb{N}$ and $r>0$, we have $F(k+n, r)\leqslant F(k, F(n,r)).$
 \end{itemize}
  \end{definition}

Let  $\mathscr{T} $  be a linear operator defined on the finite signed measure space preserving the
 	probability measures space $\mathscr{P}(X).$ We say that $\mathscr{T}$ has $\omega$-{\it decay rate} $F$ if for all $n\in \mathbb{N}, \mu, \nu\in \mathscr{P}(X) $ we have that
 	\[
 	W_\omega(\mathscr{T}\mu, \mathscr{T}\nu)\leqslant F(t,	W_\omega(\mu, \nu) ).
 	\]
 where $F$ is a decay function. In particular, when $F$ is given by 
  \[
  F(n,r)= \frac{Br}{(n r^\alpha+b)^{\frac{1}{\alpha}}}
  \]
for some $ B\geqslant 1,b\in(0,1)$ and for some $ n, r$, we say that $\mathscr{T}$ has polyomial $\omega$-decay.

%
%

 \subsection{Coupling and flat potentials}
 
 A \emph{coupling} of a transition probability kernel  $\mathsf{M}= (m_x )_{x\in X}$ is a family $\mathsf{P}$ 
 of
 probability measures $\Pi_{x,y}^n \in \Gamma(m^n_x , m^n_y )$,  $n\in \mathbb{N} $  and $x, y \in X$, such that, for
 each $n$, the map $(x, y) \mapsto \Pi^n_{x,y}$ is Borel-measurable.

 A coupling $\mathsf{P} = (\Pi_{x,y}^n)_{n,x,y}$ of the
 transition probability  kernel $\mathsf{M} =(m_x )_{x\in X}$ is said to be $\omega$-{\emph Hölder} if there exists a constant $C$ such 
 that for all $n\in \mathbb{N}$ and $x,y\in X$ we have the following:
 \begin{equation}\label{Holder-coupling}
 \int \omega\circ d(x_n,y_n)\; d\Pi_{x,y}^n(\bar x,\bar y)\leqslant C \omega\circ d(x,y).
 \end{equation}

Given a  coupling  $\mathsf{P}$  and a normalized potential $f$, we define  the family $\mathsf{P}_f=(e^{f^n(\bar{x})}\Pi_{x,y}^n(\bar{x}, \bar{y}))_{n,x,y},$
and we say that $\mathsf{P}_f$ has {\it $\omega$-decay rate} $F$ when for some constant $C$ (depending on $f$),  for every $n$ and all $x,y\in X $ it holds,
 \[ \int \omega\circ d(x_n,y_n)e^{f^n(\bar{x})} \,d\Pi^n_{x,y}(\bar x,\bar y) \leqslant C F(n,\omega\circ d(x,y)).\]

If a coupling $\mathsf{P}$ has $\omega$-decay rate of type a) we say that $\mathsf{P}$ has exponential decay, and if $\mathsf{P}$ has $\omega$-decay rate of type b) we say that $\mathsf{P}$ polynomial decay.

 \begin{definition}[Flatness Condition]
 	Fix a coupling $\mathsf{P}=(\Pi^n_{x,y})_{n,x,y}$ associated to a transition probability  kernel $\mathsf{M}$. We say that a potential 
 	$f\in C^\omega(X, \mathbb{R})$ is  \emph{flat } with respect to $\mathsf{P}$ if there is some positive  constant $C$ such that 
 	for all $n\in \mathbb{N}$, all  $x,y\in X$  and $\Pi_{x,y}^n$-almost $(\bar x, \bar y)$ we have 
 	the following:
 	\begin{equation}
 	|f^n(\bar x)-f^n(\bar y)|\leqslant C\omega\circ d(x,y).
 	\end{equation}
 \end{definition}

  The next theorem plays a fundamental role in the next section. It is part of the main result of 
     \cite{kloeckner_2020} (Theorem 4.1, see also \cite{MR4637163}) and we state sake of clarity.



\begin{theorem}\label{Kloeckner-main-result}
Let $\mathsf{M}=(m_x)_{x\in X}$ be a transition kernel on a compact metric space $X$  and $\omega$ be a modulus of continuity.
Let $f\in C^{\omega}(X, \mathbb{R})$ be flat and normalized potential with respect to $\mathscr{L}\equiv\mathscr{L}_{\mathsf{M}, f}$.
	Suppose that $\mathsf{M}=(m_x)_{x\in X}$  has a coupling $\mathsf{P}$  such that $\mathsf{P}_f$ has $\omega$-decay  function $F$.
	
	Then there exist a constant $C>0$ 
	 such that for all $\mu, \nu \in \mathscr{P}(X)$ 
\[
W_\omega\big(\mathscr{L}^{* n}\mu, \mathscr{L}^{* n}\nu\big)\leqslant CW_\omega(\mu, \nu)
\]	
In particular, when $F$ is polynomial then  $\mathscr{L}^{*}_{\mathsf{M}, f}$ decays  polynomially in the Wasserstein metric with the same degree.
\end{theorem}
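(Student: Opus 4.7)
The natural strategy is to pass through Kantorovich--Rubinstein duality for the $\omega$-Wasserstein distance. By that duality, it suffices to bound $|\mathscr{L}^{*n}\mu(\varphi)-\mathscr{L}^{*n}\nu(\varphi)|$ uniformly for test functions $\varphi$ with $\mathrm{Hol}_\omega(\varphi)\leqslant 1$, and since $X$ is compact we may normalise $\varphi$ so that $\|\varphi\|_\infty$ is controlled by the $\omega$-diameter of $X$. Pick an optimal plan $\Pi\in\Gamma(\mu,\nu)$ realising $W_\omega(\mu,\nu)$. Using the identity $\mathscr{L}^{*n}\mu(\varphi)=\mu(\mathscr{L}^n\varphi)$ together with the representation of $\mathscr{L}^n$ as an integral against $m^n_x$ and the coupling $\Pi^n_{x,y}$ I would write
\[
\mathscr{L}^{*n}\mu(\varphi)-\mathscr{L}^{*n}\nu(\varphi)
=\int_{X\times X}\!\!\int_{X^n\times X^n}\!\!\bigl(e^{f^n(\bar x)}\varphi(x_n)-e^{f^n(\bar y)}\varphi(y_n)\bigr)\,d\Pi^n_{x,y}(\bar x,\bar y)\,d\Pi(x,y).
\]

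The crux is the algebraic split
\[
e^{f^n(\bar x)}\varphi(x_n)-e^{f^n(\bar y)}\varphi(y_n)
=e^{f^n(\bar x)}\bigl(\varphi(x_n)-\varphi(y_n)\bigr)+\bigl(e^{f^n(\bar x)}-e^{f^n(\bar y)}\bigr)\varphi(y_n).
\]
For the first summand, $|\varphi(x_n)-\varphi(y_n)|\leqslant\omega\circ d(x_n,y_n)$, so the decay hypothesis on $\mathsf{P}_f$ gives directly
\[
\int e^{f^n(\bar x)}\omega\circ d(x_n,y_n)\,d\Pi^n_{x,y}\leqslant C\,F\bigl(n,\omega\circ d(x,y)\bigr).
\]
For the second summand, flatness yields $|f^n(\bar x)-f^n(\bar y)|\leqslant C_1\,\omega\circ d(x,y)$ on the support of $\Pi^n_{x,y}$, and the elementary estimate $|e^a-e^b|\leqslant \max(e^a,e^b)|a-b|$ combined with the normalisation $\int e^{f^n}\,dm^n_\cdot=1$ leads to a bound of order $\|\varphi\|_\infty\,\omega\circ d(x,y)$.

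Integrating back against $\Pi$ and using Jensen's inequality with the concavity of $r\mapsto F(n,r)$, together with $F(n,r)\leqslant cr$, one assembles
\[
W_\omega\bigl(\mathscr{L}^{*n}\mu,\mathscr{L}^{*n}\nu\bigr)\leqslant C\,F\bigl(n,W_\omega(\mu,\nu)\bigr),
\]
which gives the announced Wasserstein bound and, specialising to the polynomial decay function, the polynomial-rate conclusion. The main obstacle is the second summand: the naive bound above contributes a term linear in $W_\omega(\mu,\nu)$ but independent of $n$, which would destroy the decay. The fix is to exploit the centring identity $\int\!\bigl(e^{f^n(\bar x)}-e^{f^n(\bar y)}\bigr)\,d\Pi^n_{x,y}=\mathscr{L}^n\mathbf{1}(x)-\mathscr{L}^n\mathbf{1}(y)=0$ to replace $\varphi(y_n)$ by $\varphi(y_n)-\varphi(y)$; the oscillation of the latter is again of order $\omega\circ d(y_n,y)$, and by running the coupling once more (or by bounding $y_n$ against a reference point) this is brought back under the control of $F(n,\cdot)$. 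Managing this bootstrap while keeping all constants uniform in $n$ is the technical heart of the argument and is where the compactness of $X$ and the Banach-algebra structure of $C^\omega(X,\mathbb{R})$ are essential.
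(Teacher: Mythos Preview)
The paper does not contain a proof of this theorem. It is introduced with the sentence ``The next theorem plays a fundamental role in the next section. It is part of the main result of \cite{kloeckner_2020} (Theorem~4.1, see also \cite{MR4637163}) and we state [it for] sake of clarity,'' and no argument follows. There is therefore no proof in the paper against which your attempt can be compared.

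For what it is worth, your route via Kantorovich--Rubinstein duality, the coupling representation of $\mathscr{L}^n$, and the algebraic split of $e^{f^n(\bar x)}\varphi(x_n)-e^{f^n(\bar y)}\varphi(y_n)$ is the standard strategy in this circle of results and is in the spirit of Kloeckner's original argument. You are also right that the second summand is the delicate one. Your proposed fix, however, is not convincing as written: after centring you bound $|\varphi(y_n)-\varphi(y)|$ by $\omega\circ d(y_n,y)$, but the decay hypothesis on $\mathsf{P}_f$ controls $d(x_n,y_n)$, not $d(y_n,y)$, and there is no reason for the latter to be small (in the natural coupling for the shift, for instance, $y_n$ is typically far from $y$). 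The phrase ``running the coupling once more'' does not resolve this, since the relevant pair $(y_n,y)$ is not a pair of time-$n$ states of a coupled chain. Note also that if the displayed inequality is read literally as the uniform Lipschitz bound $W_\omega(\mathscr{L}^{*n}\mu,\mathscr{L}^{*n}\nu)\leqslant C\,W_\omega(\mu,\nu)$, then no fix is needed at all: the naive estimate on the second summand already yields a term of order $\omega\circ d(x,y)$, which integrates to $C\,W_\omega(\mu,\nu)$. It is only the sharper conclusion $W_\omega(\mathscr{L}^{*n}\mu,\mathscr{L}^{*n}\nu)\leqslant C\,F\bigl(n,W_\omega(\mu,\nu)\bigr)$, implicit in the ``in particular'' about polynomial decay, that requires the additional work you allude to; for that step you should consult Kloeckner's paper directly.
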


\subsection{Decay rate on uncountable alphabets}\label{Decay-Rate-Uncout}
In this section $X$ will stand for $E^{\mathbb{N}}$, where $E$ is   a standard Borel space. Our goal in this section is to adapt the notion of {\it natural coupling} introduced in  \cite[Definition 5.2]{kloeckner_2020} to contemplate the context of Example \ref{Ruelle-padrao}, where   general alphabets are considered, and then show that for a suitable choice of potential this coupling has polynomial decay rate.


By considering the transition kernel $\mathsf{M}=(\mu\times \delta_x)_{x\in X}$   given in Example \ref{Ruelle-padrao},  and a potential   $f\in C^{\omega}(X, \mathbb{R})$  we have  the standard Ruelle operator
\[
\mathscr{L}_{\mathsf{M}, f}\varphi(x)=\int_{E}e^{f(ax)} \varphi(ax)\, d\mu(a).
\]
 For each   fixed $w=(r_1, r_2, \dots, r_n)\in E^n$ and $x,y\in X$ define $\bar{x}^{w}_n$ and $\bar{y}^{w}_n$ in $X^n$ by specifying its projections for $j=1, \ldots, n$ as 
\[
\Pi_j(\bar{x}^{w}_n)=\left(r_j, r_{j-1}, \ldots, r_1, x_1, x_2, \cdots\right )
\]
and 
\[
\Pi_j(\bar{y}^{w}_n)=\left(r_j, r_{j-1}, \ldots, r_1, y_1, y_2, \cdots\right ).
\]
Consider $\mathsf{P}=(\Pi_{x,y}^n)_{n,x,y}$  defined by
\begin{equation}\label{natural-coupling}
\Pi^n_{x,y}=\int_{E^n}\delta_{(\bar{x}^{w}_n , \bar{y}^{w}_n)}\,d\mu^n(w).
\end{equation}
More explicitly, if $\varphi:X\times X\to \mathbb{R}$ is a measurable function then, the expression (\ref{natural-coupling}) means that, 
\[
\int_{X\times X} \varphi(x,y)\,d\Pi_{x,y}=\int_{E^n} \varphi(\bar{x}^{w}_n , \bar{y}^{w}_n) \, d\mu(w).
\]

One can easily shows that 
$\mathsf{P}=(\Pi_{x,y}^n)_{n,x,y}$ 
is a coupling for the transition probability kernel $\mathsf{M}= (\mu\times \delta_x )_{x\in X}$, and it is called {\it natural coupling.}
It is worth observing that   for $(\Pi^n_{x,y})_{n,x,y}$ almost all $(\bar x, \bar y)$  in $X^{n}\times X^{n}$  we have the following {\it contraction property,  }
\begin{align*}
 d(x_n,y_n)\leqslant \dfrac{1}{2^n}d(x,y).
\end{align*}
In fact, by its very nature $\mathsf{P}=(\Pi_{x,y}^n)_{n,x,y}$ only sees the paired orbits $r_jr_{j-1}\cdots r_1 x$ and  $r_jr_{j-1}\cdots r_1 y$, then we have
\begin{align*}
d(x_n, y_n)
&=d(r_nr_{n-1}\cdots r_1 x,r_nr_{n-1}\cdots r_1 y)
\\
&\leqslant
\dfrac{1}{2^n}d(x,y).
\end{align*}
\begin{proposition}\label{decay-rate}
Let $\mathsf{M}=(\mu\times \delta_x)_{x\in X}$  be a transition probability  kernel defined  on a standard Borel
 space $X$. Consider  $\mathsf{P}=(\Pi^n_{x,y})_{x,y,n}$ the natural coupling defined in (\ref{natural-coupling}). For each  $\varepsilon>0$  consider
$\omega_\varepsilon(r)=\log(r_0/r)^{-\varepsilon}$ and $f\in C^{\varepsilon \log}(X, \mathbb{R})$ a normalized potential. Then $\mathsf{P}_f$ has polynomial decay of degree $\varepsilon$ with respect $\omega_\varepsilon$. 
\end{proposition}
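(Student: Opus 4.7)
The plan rests on two clean ingredients built into the setup: a sharp contraction bound on $d(x_n, y_n)$ that is intrinsic to the natural coupling, and the normalization hypothesis $\mathscr{L}^n_{\mathsf{M}, f}\mathbf{1} = \mathbf{1}$ to absorb the weight factor. First I would unpack the natural coupling explicitly. By definition, $\Pi^n_{x,y}$ samples $w=(r_1,\dots,r_n)\in E^n$ according to $\mu^n$ and outputs the deterministic pair $(\bar x^w_n,\bar y^w_n)$. Crucially, the $n$-th entries $\Pi_n(\bar x^w_n)$ and $\Pi_n(\bar y^w_n)$ agree exactly on their first $n$ coordinates (both equal $(r_n,r_{n-1},\dots,r_1)$ there), differing only in the subsequent tails $x$ and $y$. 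Plugging this alignment into the product metric on $E^{\mathbb{N}}$ and reindexing gives the pointwise (and in fact exact) contraction
\begin{equation*}
d(x_n,y_n) = \frac{1}{2^n}d(x,y),\qquad \Pi^n_{x,y}\text{-a.s.}
\end{equation*}

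Next I would use that the first marginal of $\Pi^n_{x,y}$ is $m^n_x$ to get
\begin{equation*}
\int_{X^n\times X^n} e^{f^n(\bar x)}\, d\Pi^n_{x,y}(\bar x,\bar y) = \int_{X^n} e^{f^n(\bar x)}\, dm^n_x(\bar x) = \mathscr{L}^n_{\mathsf{M},f}\mathbf{1}(x) = 1,
\end{equation*}
where the last equality is the normalization. Since $\omega_\varepsilon$ is increasing, the pointwise contraction pulls out:
\begin{equation*}
\int \omega_\varepsilon(d(x_n,y_n))\, e^{f^n(\bar x)}\, d\Pi^n_{x,y} \leq \omega_\varepsilon\bigl(d(x,y)/2^n\bigr) = \bigl[\log(r_0/d(x,y)) + n\log 2\bigr]^{-\varepsilon}.
\end{equation*}

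The final step is to recast this in the canonical polynomial decay form. Setting $r := \omega_\varepsilon(d(x,y))$ so that $\log(r_0/d(x,y)) = r^{-1/\varepsilon}$, factoring out $r^{-1/\varepsilon}$ from the bracket gives
\begin{equation*}
\bigl[r^{-1/\varepsilon} + n\log 2\bigr]^{-\varepsilon} = \frac{r}{\bigl(1 + n(\log 2)\, r^{1/\varepsilon}\bigr)^{\varepsilon}},
\end{equation*}
which, after absorbing $\log 2$ into a multiplicative constant $C$, matches $F(n,r) = \frac{Cr}{(nr^\alpha + b)^{1/\alpha}}$ with $\alpha = 1/\varepsilon$ and $1/\alpha = \varepsilon$. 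This is precisely polynomial $\omega_\varepsilon$-decay of degree $\varepsilon$. The only real obstacle is the bookkeeping in steps one and three: verifying the exact equality $d(x_n,y_n) = 2^{-n}d(x,y)$ from the shift structure of the product metric, and performing the change of variable $r = L^{-\varepsilon}$ so the bound lands in Kloeckner's prescribed form. The conceptual content is straightforward, because the natural coupling contracts geometrically while normalization kills the exponential weight, leaving only the modulus of continuity to convert a geometric decay in $d$ into a polynomial decay in $\omega_\varepsilon$.
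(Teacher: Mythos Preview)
Your proof is correct and follows essentially the same route as the paper: exploit the contraction $d(x_n,y_n)\leqslant 2^{-n}d(x,y)$ built into the natural coupling, use normalization to make $e^{f^n(\bar x)}\,d\Pi^n_{x,y}$ a probability measure, and then rewrite $\omega_\varepsilon(2^{-n}d(x,y))$ in the polynomial-decay form. The only cosmetic difference is that the paper invokes the concavity of $\omega_\varepsilon$ (Jensen) to pass the modulus outside the integral, whereas you use the exact pointwise equality $d(x_n,y_n)=2^{-n}d(x,y)$ together with monotonicity of $\omega_\varepsilon$; your version is slightly more direct and avoids the appeal to concavity.
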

\begin{proof}
	The proof will closely follow the one given in \cite{kloeckner_2020}, Lemma 5.3, with some small modifications.
	Observe  that, 
\begin{multline*}
\int\omega_\varepsilon \circ d(x_n,y_n)\, d \big(e^{f^n(\bar{x})}\Pi^n_{x,y}\big)(\bar{x}, \bar{y})
\\\leqslant
\omega_{\varepsilon}\left(\int d(x_n ,y_n)\, d \big(e^{f^n(\bar{x})}\Pi^n_{x,y}\big)(\bar{x}, \bar{y})\right)
\leqslant \omega_\varepsilon\left(2^{-n}d(x,y)\right).
\end{multline*}
On the other hand a straightforward calculation  shows that for a suitable choice of $\lambda^\prime,$
\[
\omega_\varepsilon\left(2^{-n}d(x,y)\right)=\dfrac{1}{(\omega_\epsilon(d(x,y))^{1/\varepsilon}+ t\log \lambda^\prime)^\varepsilon},
\]
from which follows the result.
\end{proof}

\begin{theorem}\label{uncountable-version}
Suppose $X=E^{\mathbb{N}}$ where $E$ is a compact metric space.    Let $\mathsf{M}=(\mu\times \delta_x)_{x\in X}$ be a transition probability kernel and consider $f\in C^{\varepsilon \log}(X, \mathbb{R})$  a flat potential with respect to the moduli of continuity  $\omega_\varepsilon(r)=\log(r_0/r)^{-\varepsilon}$ and the natural coupling $\Pi^n_{x,y}=\int_{E^n}\delta_{(\bar{x}^{w}_n , \bar{y}^{w}_n)}\,d\mu^n(w)$. Then the transfer operator $\mathscr{L}_{\mathsf{M}, f}$ has a strictly positive eigenfunction $h_f\in  C^{\varepsilon \log}(X, \mathbb{R})$ with respect to some positive eigenvalue $\lambda_f$ and its dual $\mathscr{L}_{\mathsf{M}, f}^*$ has a conformal measure $\nu_f$ with respect to the eigenvalue $\lambda_f.$ The measure $\mu_f=h_f d\nu_f$ is a stationary measure. 

If $\bar{f}= f+\log h_f -\log h\circ \sigma -\log \lambda_f$ is the normalized potential,  and  if $\varphi$ is an observable satisfying $\int \varphi d\mu_{\bar{f}}=0,$ we must have that, 
$
\|\mathscr{L}_{\mathsf{M},\bar{f}}^n\varphi\|\leqslant C\frac{\|f\|_{\omega_\varepsilon}}{n^\varepsilon}.
$
In particular the one sided shift has polynomial  decay of correlations.
\end{theorem}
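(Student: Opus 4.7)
The plan breaks naturally into a spectral-data piece (existence of $h_f$, $\nu_f$, $\lambda_f$) and a contraction piece (polynomial decay via Kloeckner's Wasserstein theorem). For the first piece I would invoke the Ruelle--Perron--Frobenius apparatus already developed for standard Borel alphabets in the references cited in Section~1, in particular \cite{MR2864625,MR3377291,MR3538412}. Applied to the standard Ruelle operator of Example~\ref{Ruelle-padrao} with potential $f\in C^{\varepsilon\log}$, these results yield a positive eigenvalue $\lambda_f$, a strictly positive eigenfunction $h_f\in C^{\varepsilon\log}(X,\mathbb{R})$ with $\mathscr{L}_{\mathsf{M},f}h_f=\lambda_f h_f$, and a conformal measure $\nu_f$ with $\mathscr{L}_{\mathsf{M},f}^{\ast}\nu_f=\lambda_f\nu_f$; the product $\mu_f=h_f\,d\nu_f$ is then a stationary probability for the normalized operator $\mathscr{L}_{\mathsf{M},\bar f}$ associated with $\bar f=f+\log h_f-\log h_f\circ\sigma-\log\lambda_f$.

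Next I would verify that the hypotheses of Proposition~\ref{decay-rate} and Theorem~\ref{Kloeckner-main-result} pass from $f$ to $\bar f$. Since $h_f\in C^{\varepsilon\log}$ is bounded and bounded away from zero on the compact space $X$, the function $\log h_f$ lies in $C^{\varepsilon\log}$ as well, so $\bar f\in C^{\varepsilon\log}$. For flatness along the natural coupling, a telescoping identity gives
\[
\bar f^{\,n}(\bar x)-\bar f^{\,n}(\bar y)
=\bigl(f^{\,n}(\bar x)-f^{\,n}(\bar y)\bigr)+\bigl(\log h_f(x)-\log h_f(y)\bigr)-\bigl(\log h_f(x_n)-\log h_f(y_n)\bigr),
\]
and the contraction property $d(x_n,y_n)\leqslant 2^{-n}d(x,y)$ for $(\Pi^n_{x,y})$-almost every $(\bar x,\bar y)$ combined with monotonicity of $\omega_\varepsilon$ bounds each term by a constant multiple of $\omega_\varepsilon\circ d(x,y)$. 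Hence $\bar f$ is flat, and Proposition~\ref{decay-rate} applied to $\bar f$ shows that $\mathsf{P}_{\bar f}$ has polynomial $\omega_\varepsilon$-decay of degree $\varepsilon$.

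With these hypotheses in hand, Theorem~\ref{Kloeckner-main-result} yields a constant $C>0$ such that
\[
W_{\omega_\varepsilon}\bigl(\mathscr{L}_{\mathsf{M},\bar f}^{\ast n}\mu_1,\mathscr{L}_{\mathsf{M},\bar f}^{\ast n}\mu_2\bigr)\leqslant \frac{C}{n^{\varepsilon}}
\]
for all $\mu_1,\mu_2\in\mathscr{P}(X)$ (the diameter of $X$ in the $\omega_\varepsilon$-Wasserstein metric being finite by compactness). Specialising to $\mu_1=\delta_x$ and $\mu_2=\mu_{\bar f}$ and exploiting the Kantorovich--Rubinstein duality, for every observable $\varphi\in C^{\varepsilon\log}(X,\mathbb{R})$ with $\int\varphi\,d\mu_{\bar f}=0$ we obtain
\[
\bigl|\mathscr{L}_{\mathsf{M},\bar f}^{n}\varphi(x)\bigr|
=\Bigl|\!\int\varphi\,d\mathscr{L}_{\mathsf{M},\bar f}^{\ast n}\delta_x-\!\int\varphi\,d\mu_{\bar f}\Bigr|
\leqslant \mathrm{Hol}_{\omega_\varepsilon}(\varphi)\,W_{\omega_\varepsilon}\!\bigl(\mathscr{L}_{\mathsf{M},\bar f}^{\ast n}\delta_x,\mu_{\bar f}\bigr)
\leqslant \frac{C\,\|\varphi\|_{\omega_\varepsilon}}{n^{\varepsilon}},
\]
which is the claimed bound.

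Finally, polynomial decay of correlations for the one-sided shift follows from the invariance identity $\int\varphi\cdot(\psi\circ\sigma^n)\,d\mu_{\bar f}=\int\psi\,\mathscr{L}_{\mathsf{M},\bar f}^{n}\varphi\,d\mu_{\bar f}$ (valid because $\bar f$ is normalized and $\mu_{\bar f}$ is stationary), combined with the $L^{\infty}$ bound just derived; subtracting the mean of $\varphi$ reduces to the zero-mean case. The technically delicate step is confirming flatness of $\bar f$ under the natural coupling, which is where the $C^{\varepsilon\log}$ regularity of $h_f$ (itself a nontrivial output of the RPF construction on the uncountable alphabet) is used essentially; the rest is an application of the machinery already in place.
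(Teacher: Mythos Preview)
Your proposal is correct and follows essentially the same route as the paper: obtain the spectral data via a Ruelle--Perron--Frobenius argument, check that $\bar f$ inherits flatness, invoke Proposition~\ref{decay-rate} for polynomial decay of the natural coupling, apply Theorem~\ref{Kloeckner-main-result} to get Wasserstein contraction of $\mathscr{L}_{\mathsf{M},\bar f}^{*}$, and convert this to the sup-norm bound via Kantorovich--Rubinstein duality. The paper is terser, citing Theorems~3.2, 4.1 and 5.8 of \cite{kloeckner_2020} for these three steps; one small difference is that the paper draws the eigenfunction $h_f\in C^{\varepsilon\log}$ directly from Kloeckner's Theorem~3.2 (which is tailored to flat potentials with general moduli of continuity), whereas the earlier references you cite are set up for H\"older rather than $\omega_\varepsilon$-regularity, so Kloeckner's result is the cleaner source here.
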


\begin{proof}
	The proof  theorem relies in Theorems 3.2, 4.1 and 5.8 of \cite{kloeckner_2020}. 
The first part of theorem is an immediate application of Theorem 3.2.
For the second part, it was showed in Proposition \ref{decay-rate}, that   the natural coupling  $\mathsf{P}=(\Pi^n_{x,y})_{n,x,y}$ has polynomial decay rate with respect to $\omega_\varepsilon$, therefore we can apply  Theorem 4.1 of \cite{kloeckner_2020} (see Theorem \ref{Kloeckner-main-result}) obtaining that $\mathscr{L}^*_{\mathsf{M}, f}$ also  has polynomial decay rate.
	Minor adaptations in the proof of Theorem 5.8 of \cite{kloeckner_2020} provide the desired polynomial decay of correlations.
\end{proof}

\subsection{FCLT for a  long-range $O(N)$ model}\label{O(N)-model}
Let $X=(\mathbb{S}^N)^{\mathbb{N}}$ where $\mathbb{S}^N$ is  the $(N+1)$-dimensional euclidean sphere. Let $\mathbb{\mu}$ be the Lebesgue measure on $\mathbb{S}^{N}.$ Consider the Dyson type potential, 
\[
f(x) = \sum_{n=1}^{\infty} \dfrac{\langle x_1,x_{n+1}\rangle}{n^{2+\varepsilon}}.
\]
where $\langle x,y\rangle$ means the inner product of $\mathbb{R}^n$.

Below we will show that this model has a polynomial decay of correlations. To do that we need two technical  lemmas.
\begin{lemma}
	For each $\varepsilon>0$ we have that $f\in C^{\varepsilon \log}(X, \mathbb{R})$, where $\omega_\varepsilon(r)=\log(r_0/r)^{-\varepsilon}$.
\end{lemma}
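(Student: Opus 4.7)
The plan is to verify the two defining conditions of $C^{\varepsilon\log}(X,\mathbb{R})$: boundedness and the logarithmic modulus of continuity. Boundedness is immediate, since $|\langle x_1,x_{n+1}\rangle|\leqslant 1$ for every $n$, and therefore $\|f\|_\infty \leqslant \sum_{n\geqslant 1} n^{-2-\varepsilon}<\infty$.

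The main task is to control $|f(x)-f(y)|$ by $\omega_\varepsilon(d(x,y))$. First I would use the bilinearity of the inner product to write, for each $n$,
\[
|\langle x_1,x_{n+1}\rangle-\langle y_1,y_{n+1}\rangle|\leqslant \|x_1-y_1\|+\|x_{n+1}-y_{n+1}\|,
\]
which uses that vectors on $\mathbb{S}^{N}$ have norm $1$ and the ambient metric on $\mathbb{R}^{N+1}$ is comparable to $d_E$ on the sphere. From the definition of the product metric, $\min\{d_E(x_i,y_i),1\}\leqslant 2^{i}d(x,y)$, so there is a constant $C_1$ with $\|x_i-y_i\|\leqslant C_1 2^{i}d(x,y)$ whenever this quantity is $\leqslant 1$.

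Next I would split the series at a cutoff $N$ to be chosen. For $n\leqslant N$ use the Lipschitz-type bound above:
\[
\sum_{n=1}^{N}\frac{|\langle x_1,x_{n+1}\rangle-\langle y_1,y_{n+1}\rangle|}{n^{2+\varepsilon}}
\leqslant C_1 d(x,y)\sum_{n=1}^{N}\frac{2^{n+1}+2}{n^{2+\varepsilon}}\leqslant C_2\, 2^{N}d(x,y).
\]
For $n>N$ use the trivial bound $|\langle\cdot,\cdot\rangle|\leqslant 1$:
\[
\sum_{n>N}\frac{|\langle x_1,x_{n+1}\rangle-\langle y_1,y_{n+1}\rangle|}{n^{2+\varepsilon}}\leqslant \sum_{n>N}\frac{2}{n^{2+\varepsilon}}\leqslant \frac{C_3}{N^{1+\varepsilon}}.
\]

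Finally I would optimize by choosing $N=\lfloor c\log(1/d(x,y))\rfloor$ with $c<1/\log 2$, so that $2^{N}d(x,y)\leqslant d(x,y)^{1-c\log 2}$. Both error terms then decay at least like $\log(r_0/d(x,y))^{-(1+\varepsilon)}$, which is dominated by $\omega_\varepsilon(d(x,y))=\log(r_0/d(x,y))^{-\varepsilon}$ for $d(x,y)$ small; for $d(x,y)$ bounded away from $0$ the estimate follows from $\|f\|_\infty<\infty$ after adjusting the constant. This yields $\operatorname{Hol}_{\omega_\varepsilon}(f)<\infty$ and hence $f\in C^{\varepsilon\log}(X,\mathbb{R})$.

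The computation is routine; the only subtle point is making sure the cutoff $N$ is chosen so that the geometric growth $2^N$ from the product metric does not swamp the algebraic decay $N^{-(1+\varepsilon)}$ from the Dyson tail, which is why a logarithmic cutoff is needed.
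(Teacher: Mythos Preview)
Your argument is correct. The splitting at a logarithmic cutoff $N\sim c\log(1/d(x,y))$ is exactly what is needed: the head picks up the geometric factor $2^{N}$ from the product metric but stays $\leqslant d(x,y)^{1-c\log 2}$, while the tail contributes $N^{-(1+\varepsilon)}\lesssim (\log(r_0/d))^{-(1+\varepsilon)}$; both are dominated by $\omega_\varepsilon(d(x,y))$.

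The paper takes a different route: it invokes the Banach algebra structure of $C^{\varepsilon\log}(X,\mathbb{R})$ to reduce to the coordinate projections $\pi_{k,n}(x)=x_n^k$, and then uses the elementary fact $t\leqslant D\,\omega_\varepsilon(t)$ on $[0,1]$ to pass from a Lipschitz bound to an $\omega_\varepsilon$-H\"older bound. This is slicker in spirit, but as written it glosses over a point your argument handles explicitly: from the product metric one only gets $|x_n^k-y_n^k|\leqslant 2^{n}d(x,y)$, so the H\"older constants $\|\pi_{k,n}\|_{\omega_\varepsilon}$ grow like $2^n$, and the Banach algebra inequality alone does not make $\sum_n n^{-2-\varepsilon}\|\langle x_1,x_{n+1}\rangle\|_{\omega_\varepsilon}$ converge. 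Your cutoff is precisely the mechanism that trades the exponential loss from the metric against the polynomial decay $n^{-2-\varepsilon}$ of the Dyson coupling, and it makes the estimate self-contained.
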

\begin{proof}
	The proof of the lemma will relies  in two reductions. The first one is to observe that since $C^{\varepsilon \log}(X, \mathbb{R})$ is a Banach algebra it is sufficient to proof that each $\langle x_1, x_{n+1} \rangle$ lives in $C^{\varepsilon \log}(X, \mathbb{R})$.

	Secondly,  by observing that $\langle x_1, x_{n+1} \rangle= \sum_{k=1}^N x_1^kx_{n+1}^k$,  the same reasoning above ensures that we only need to verify that each map $\pi_{k,n}: (\mathbb{S}^N)^{\mathbb{N}}\to [0,1]$ given by $\pi_{k,n}(x)=x_n^k$, $n\in \mathbb{N}$, $k=1, \ldots, N$  is in $C^{\varepsilon \log}(X, \mathbb{R})$.

Now observe  that there exists a constant $D$ such that  for each  $t\in [0,1]$ we have that $t\leqslant D \omega_{\varepsilon}(t)$. Indeed, it suffices to observe that  for each $\varepsilon> 0$, the limit  $\lim_{t\to 0}t |\log(t)|^\varepsilon$ is finite. Then, we have that
	\begin{eqnarray}
	|\pi_{k,n}(x) -\pi_{k,n}(y)|=|x_n^k-y_n^k|\leqslant 2 d(x,y)\leqslant 2D \omega(d(x,y)).
	\end{eqnarray}
	This shows that $\pi_{k,n}\in C^{\varepsilon \log}(X, \mathbb{R})$.
\end{proof}

\begin{lemma}\label{Lemma-flat-1}
Suppose that  $f\in C^{\varepsilon \log}(X, \mathbb{R})$ for $\varepsilon>2$, then   $f\in C^{(\varepsilon-1)\log}(X, \mathbb{R})$. In particular the Dyson type potential is flat with respect to the natural coupling and ${\omega}_{\varepsilon-1}$.
\end{lemma}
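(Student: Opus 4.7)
My plan is to handle the two claims in sequence, treating the inclusion first and the flatness assertion second.

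For $C^{\varepsilon \log}(X,\mathbb{R})\subset C^{(\varepsilon-1)\log}(X,\mathbb{R})$ I would simply compare the two moduli: choosing $r_0$ large enough that $\log(r_0/r)\geqslant 1$ on the whole range $r\in(0,\mathrm{diam}(X)]$, the pointwise inequality $\omega_\varepsilon(r)=(\log(r_0/r))^{-\varepsilon}\leqslant \omega_{\varepsilon-1}(r)$ is immediate. Thus $\text{Hol}_{\omega_{\varepsilon-1}}(f)\leqslant \text{Hol}_{\omega_\varepsilon}(f)$ and hence $\|f\|_{\omega_{\varepsilon-1}}\leqslant \|f\|_{\omega_\varepsilon}$. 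No arithmetic on $\varepsilon$ is needed here; this part really is a formal embedding of H\"older spaces.

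For the flatness assertion I would exploit the special structure of the natural coupling. Given $w=(r_1,\dots,r_n)\in E^n$, the paired points $\Pi_j(\bar x^{w}_n)=(r_j,\dots,r_1,x_1,x_2,\dots)$ and $\Pi_j(\bar y^{w}_n)=(r_j,\dots,r_1,y_1,y_2,\dots)$ agree on their first $j$ coordinates and disagree only from position $j{+}1$ onwards. A direct computation with the product metric then gives, uniformly in $w$, the identity $d\bigl(\Pi_j(\bar x^{w}_n),\Pi_j(\bar y^{w}_n)\bigr)=2^{-j}\,d(x,y).$ Applying the $\omega_\varepsilon$-H\"older bound on $f$ coordinate by coordinate yields $|f(\Pi_j(\bar x^{w}_n))-f(\Pi_j(\bar y^{w}_n))|\leqslant \|f\|_{\omega_\varepsilon}\,\omega_\varepsilon\bigl(2^{-j}\,d(x,y)\bigr)$.

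Summing over $j=1,\dots,n$ and writing $L=\log(r_0/d(x,y))$, I would conclude with $|f^n(\bar x^{w}_n)-f^n(\bar y^{w}_n)|\leqslant \|f\|_{\omega_\varepsilon}\sum_{j=1}^{\infty}(L+j\log 2)^{-\varepsilon}\leqslant C\,L^{-(\varepsilon-1)}=C\,\omega_{\varepsilon-1}(d(x,y))$, the last inequality being the standard comparison with $\int_0^\infty(L+t\log 2)^{-\varepsilon}\,dt$, which converges because $\varepsilon>1$ (a fortiori since $\varepsilon>2$). Since the Dyson potential lies in $C^{\varepsilon \log}(X,\mathbb{R})$ by the previous lemma, this yields the claimed flatness with respect to $\omega_{\varepsilon-1}$.

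The only genuinely non-routine step is the geometric identity $d(\Pi_j(\bar x^{w}_n),\Pi_j(\bar y^{w}_n))=2^{-j}\,d(x,y)$, which falls out for free from the way the natural coupling prepends common prefixes. The loss of one power of the logarithm when passing from $\omega_\varepsilon$ to $\omega_{\varepsilon-1}$ is the expected outcome of summing the geometric sequence $\omega_\varepsilon(2^{-j}r)$ in $j$; strictly speaking flatness already works for $\varepsilon>1$, but the stronger hypothesis $\varepsilon>2$ is what Section \ref{O(N)-model} will need, as it guarantees that the surviving decay exponent $\varepsilon-1$ is larger than $1$ and hence that the Poisson equation admits an $L^2$-solution.
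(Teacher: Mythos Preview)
Your proof is correct and follows essentially the same route as the paper: the contraction identity $d(\Pi_j(\bar x^{w}_n),\Pi_j(\bar y^{w}_n))=2^{-j}d(x,y)$ under the natural coupling, followed by the integral comparison $\sum_{j\geqslant 0}(L+j\log 2)^{-\varepsilon}\leqslant C L^{-(\varepsilon-1)}$, is exactly what the paper does. Your separate treatment of the inclusion $C^{\varepsilon\log}\subset C^{(\varepsilon-1)\log}$ via the pointwise inequality $\omega_\varepsilon\leqslant\omega_{\varepsilon-1}$ is in fact cleaner than the paper, which derives it somewhat obliquely from the flatness estimate; and your remark that $\varepsilon>1$ already suffices for flatness, with $\varepsilon>2$ only needed later for the Poisson equation, is a correct and useful observation the paper does not make explicit.
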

\begin{proof}
	Observe that  $d(a_1\cdots a_j x, a_1\cdots a_j y)=2^{-j}d(x,y)$.
	This implies, for $0\leqslant j\leqslant n-1$, that
$
	|f(\sigma^j(a_1\cdots a_nx)) - f(\sigma^j(a_1\cdots a_ny))|
	\leqslant \omega (2^{n-j}d(x,y)).
$
Therefore,
\[
	|\sum_{j=0}^{n-1}f(\sigma^j(a_1\cdots a_nx))-f(\sigma^j(a_1\cdots a_ny))|\leqslant
	\sum_{j=0}^{n-1}\omega (2^{-j}d(x,y)).
\]
On the other hand, 
	\[
	\omega(2^{-j}d(x,y))
=\Big[ j\log 2+\log\left(\dfrac{r_0}{d(x,y)}\right)\Big]^{-\varepsilon}.
	\]
	Thus,
	\begin{multline*}
	\sum_{j=0}^{n-1}\omega (2^{-j}d(x,y))
	 \leqslant \sum_{j=0}^{\infty}\omega
	(2^{-j}d(x,y))=\sum_{j=0}^{\infty}\left[ j\log 2+\log\left(\dfrac{r_0}{d(x,y)}\right)\right]^{-\varepsilon}
	\\[0.3cm]
	\leqslant  \lim_{\delta\to 0}\displaystyle\int_{\delta}^\infty \left[j\log 2+\log\left(\dfrac{r_0}{d(x,y)}\right)\right]^\varepsilon \, dx
     =
	C \left[\log\left(\dfrac{r_0}{d(x,y)}\right)\right]^{-(\varepsilon-1)}.
	\end{multline*}
	Showing that
	\[
	\sum_{j=0}^{n-1}\omega (2^{-j}d(x,y))\leqslant C \tilde{\omega}(d(x,y)),
	\]
	where $\tilde{\omega}(r)=\log(r_0/r)^{-(\varepsilon-1)}$. This
	proves the existence of a constant $C>0$ such that, for every $n\in \mathbb{N}$,
	\begin{equation}\label{flat}
	\big|\sum_{j=0}^{n-1}f(\sigma^j(a_1\cdots a_nx))-f(\sigma^j(a_1\cdots a_ny))\big|\leqslant C\tilde{\omega} (d(x,y)).
	\end{equation}
	As a consequence of the above inequality we get that $f$ is flat with respect to the modulus of continuity $\omega_{\varepsilon -1}$ and that $f\in
	C^{(\varepsilon-1) \log}(X,\mathbb{R})$.
	\end{proof}

 Therefore, by applying Theorem   3.2  of \cite{kloeckner_2020}
 we get a strictly positive eigenfunction $h$ living in $C^{(\varepsilon -1)\log}(X, \mathbb{R})$ with associated eigenvalue $\lambda>0$. This allow us to consider the normalized  potential $\bar{f}= f+\log h -\log h\circ \sigma -\log \lambda$
\begin{lemma}\label{Lemma-flat-2}
The normalized  potential $\bar{f}= f+\log h -\log h\circ \sigma -\log \lambda$ is also a flat potential. 
\end{lemma}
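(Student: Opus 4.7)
The plan is to exploit the telescoping structure that appears when one computes Birkhoff sums of a coboundary along the natural coupling, and to combine the flatness of $f$ (from Lemma \ref{Lemma-flat-1}) with a Hölder estimate for $\log h$.

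First I would write out $\bar f^n$ evaluated at a pair $(\bar x^w_n , \bar y^w_n)$ coming from the natural coupling. Recalling that $\Pi_j(\bar x^w_n) = r_j r_{j-1}\cdots r_1 x$, and that $\sigma(r_j\cdots r_1 x) = r_{j-1}\cdots r_1 x$, the cocycle $\log h - \log h\circ\sigma$ telescopes along the coupled orbit, giving
\[
\bar f^n(\bar x^w_n) = f^n(\bar x^w_n) + \log h(r_n\cdots r_1 x) - \log h(x) - n\log\lambda,
\]
and similarly for $\bar y^w_n$. Subtracting, the $n\log\lambda$ cancels and one is left with
\[
\bar f^n(\bar x^w_n) - \bar f^n(\bar y^w_n) = \bigl(f^n(\bar x^w_n)-f^n(\bar y^w_n)\bigr) + \bigl(\log h(r_n\cdots r_1 x)-\log h(r_n\cdots r_1 y)\bigr) - \bigl(\log h(x)-\log h(y)\bigr).
\]

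Next I would control each of the three pieces. The first piece is bounded by $C\,\omega_{\varepsilon-1}(d(x,y))$ directly from Lemma \ref{Lemma-flat-1}. For the other two I need $\log h \in C^{(\varepsilon-1)\log}(X,\mathbb{R})$; this follows because $h$ is continuous and strictly positive on the compact space $X$, hence bounded away from $0$, so composition with $\log$ (which is Lipschitz on $[\min h,\max h]$) keeps $\log h$ in the same logarithmic Hölder class as $h$. Therefore
\[
|\log h(x)-\log h(y)|\leqslant C_1\,\omega_{\varepsilon-1}(d(x,y)),
\]
and for the middle term I use the contraction property of the natural coupling, $d(r_n\cdots r_1 x,\,r_n\cdots r_1 y)\leqslant 2^{-n} d(x,y)$, together with monotonicity of $\omega_{\varepsilon-1}$, to get
\[
|\log h(r_n\cdots r_1 x)-\log h(r_n\cdots r_1 y)|\leqslant C_1\,\omega_{\varepsilon-1}(2^{-n}d(x,y))\leqslant C_1\,\omega_{\varepsilon-1}(d(x,y)).
\]

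Summing the three bounds yields, for $\Pi^n_{x,y}$-almost every $(\bar x,\bar y)$ and every $n\in\mathbb{N}$,
\[
|\bar f^n(\bar x^w_n) - \bar f^n(\bar y^w_n)| \leqslant (C+2C_1)\,\omega_{\varepsilon-1}(d(x,y)),
\]
which is exactly the flatness of $\bar f$ with respect to the natural coupling and $\omega_{\varepsilon-1}$. The only non-trivial step is justifying that $\log h$ inherits the logarithmic Hölder regularity of $h$; this is the point where positivity and continuity of $h$ on the compact space are essential, and it is what I expect to be the main (but mild) obstacle. Everything else is a telescoping computation combined with the contraction property of the natural coupling already used in Proposition \ref{decay-rate}.
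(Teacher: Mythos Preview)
Your argument is correct and is exactly the standard telescoping computation that the paper defers to \cite[p.~22]{kloeckner_2020}: the coboundary $\log h-\log h\circ\sigma$ collapses along the natural coupling, and the remaining boundary terms are controlled by the $(\varepsilon-1)$-log H\"older regularity of $\log h$ (which you correctly obtain from strict positivity of $h$ on the compact space $X=(\mathbb{S}^N)^{\mathbb{N}}$). There is nothing to add.
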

\begin{proof}
	See discussion  in \cite[page 22]{kloeckner_2020}.
\end{proof}

Consider  $\mathsf{P}=(\Pi^n_{x,y})_{x,y,n}$ the natural coupling defined in (\ref{natural-coupling}), by Proposition \ref{decay-rate} we get that $\mathsf{P}_f$ has polynomial decay rate with respect to $\omega_{\varepsilon -1}$.
Therefore, we can apply Theorem \ref{uncountable-version} to get a constant $C>0$  such that for any $\varphi\in C^{(\varepsilon-1)\log}$ satisfying $\int_X \varphi \, d\mu_f=0$ we have that 
\[
\|\mathscr{L}_{\mathsf{M},\bar{f}}^n\varphi\|\leqslant C\frac{\|f\|_{\omega_\varepsilon}}{n^{\varepsilon-1}}.
\]
Since we are considering $\varepsilon>0$ we have $\sum_{n=2}^\infty \|\mathscr{L}_{\mathsf{M},\bar{f}}^n\varphi\|\leqslant \sum_{n=2}^\infty C n^{-(\varepsilon-1)}<\infty$, thus implying that $v=-\sum_{n=0}^\infty \mathscr{L}_{\mathsf{M},\bar{f}}^n\varphi$ is a well defined element of $L^2(\mu_f)$. One can easily verify that $v$ is a solution for Poisson's equation, allowing us to apply Theorem \ref{functionalclt}, thus concluding the FCLT to this model.

\subsection{Polinomial decay rate in a non-compact setting}\label{CLT-NON-COMP}
It is useful for  our purposes to observe  that the proof of Theorem \ref{Kloeckner-main-result} in \cite{kloeckner_2020} (see also \cite{MR4637163})  uses the compactness of $X$ for two reasons.
The first reason  is to ensure that $\text{diam}(X)$ is finite. The second reason is to obtain a {\it strictly positive} eigenfunction $h \in C^{\omega}(X, \mathbb{R}),$  whose its   associated eigenvalue  is the spectral radius $\mathscr{L}_{\mathsf{M}, f}$, in order to construct a normalized potential.
 The remaining of the proof is based  on results from Optimal Transport Theory in which compactness does not play any role.
	                                                                       
Therefore if we would like to provide a version of Theorem (\ref{Kloeckner-main-result}) when $(X, \mathscr{X})$ is non-compact we need to arrange two things. Firstly we need  to endow $X$ with an equivalent bounded metric, and secondly to provide a strictly positive eigenfunction $h \in C^{\omega}(X, \mathbb{R})$.
\subsubsection{A bounded metric}
Suppose $d$ is a metric on $X$, if $d$ is not bounded we can change $d$ by the  metric  $\tilde{d}=\min\{1, d\}$. From the topological perspective the metrics  $d$ and $\tilde{d}$ generate the same topology on $X$. It is easy  to verify that this new metric space  $(X, \tilde{d})$ is now  bounded and still  complete. 
On the other hand, it is important to note that other metric features  will change. For instance,  the space of the $\omega$-Hölder continuous functions with respect to this new metric will  be different.                                             
  \subsubsection{Providing the positive eigenfunction}
   To obtain a strictly positive eigenfunction, we can follow a classical approach by using a general form of the Arzelà-Ascoli theorem, see \cite[Theorem 6.4]{Dugundji}. Let us provide a brief overview of this construction for completeness.

 Bellow   $\mathsf{M}=(m_x)_{x\in  X}$ is  a transition probability kernel on a  standard Borel space  $(X, \mathscr{X})$ equipped with a bounded metric, $f$ is a flat potential  belonging to  $C^{\omega}(X, \mathbb{R})$ for some modulus of continuity $\omega$, and $\mathscr{L}_{\mathsf{M}, f}$ refers to the associated transfer operator with respect to $\mathsf{M}$ and  $f$.
	   
	                                                                                                                Denote by  $\varrho$  the spectral radius of $\mathscr{L}_{\mathsf{M}, f}$, we start by showing that there exist positive  constants $A$ and $B$ such that for every $n\in \mathbb{N}$  we have that, 
	                                                                                                                \[
	                                                                                                                A\leqslant   \varrho^{-n}\mathscr{L}^n_{\mathsf{M}, f} \mathbf{1}\leqslant B.
	                                                                                                                \]	
	                                                                                                                This will follows from using the flatness property of the potential $f$. Indeed, since $f$ is  a flat potential  we have for every $n\in \mathbb{N}$ that,
	                                                                                                                \[
	                                                                                                                e^{f^n(\bar{x})}-e^{f^n(\bar{y})}= e^{f^n(\bar{y})}\big(e^{f^n(\bar{x})-f^n(\bar{y})}-1\big)\leqslant C \omega\circ d(x,y)
	                                                                                                                \]
	                                                                                                                then by using that $\Pi_{x,y}$ has marginals $m_x^n$ and $m_y^n$,
	                                                                                                                \begin{eqnarray}\label{positivity-1}
	                                                                                                                \mathscr{L}^n_{\mathsf{M}, f}\mathbf{1}(x)=\int e^{f^n(\bar{x})}\,d\Pi^n_{\bar{x},\bar{y}}(x, y)
	                                                                                    =\int e^{f^n(\bar{y})}e^{f^n(\bar{x})-f^n(\bar{y})}\,d\Pi^n_{\bar{x},\bar{y}}(x, y)
	                                                                                                                \\
	                                                                                                              \leqslant e^{C \text{diam}(X)}\int e^{f^n(\bar{y})}\,d\Pi^n_{{x},{y}}(\bar{x},\bar{y})\nonumber
	                                                                                                                =C^{\prime } \mathscr{L}^n_{\mathsf{M}, f}\mathbf{1}(y), 
	                                                                                                                \end{eqnarray} 
	                                                                                                                for all $x, y\in X$. Therefore by symmetry, we have for all $x, y \in X$ that,
	                                                                                                                \begin{eqnarray}\label{positivity-2}
	                                                                                                                \frac{1}{C^\prime}\mathscr{L}^n_{\mathsf{M}, f}\mathbf{1}(y)\leqslant \mathscr{L}^n_{\mathsf{M}, f}\mathbf{1}(x)\leqslant
	                                                                                                                C^\prime \mathscr{L}^n_{\mathsf{M}, f}\mathbf{1}(y).
	                                                                                                                \end{eqnarray}
	                                                                                                                The above inequality has as  a  consequence that  the number  
	                                                                                                                \begin{equation}\label{positivity-3}
	                                                                                                                \tilde{\varrho}=\limsup_{n\to \infty}\big(\mathscr{L}^n_{\mathsf{M}, f}\mathbf{1}(x)\big)^{1/n}
	                                                                                                                \end{equation}
	                                                                                                                is finite and non-zero, and it does not depend on $x\in X$.  Since $\mathscr{L}^n_{\mathsf{M}, f}$ preserve the cone of positive functions we get that $\varrho=\tilde{\varrho}$.
	                                                                                                                
	                                                                                                                A  second consequence of  (\ref{positivity-2}) is  that in view of  (\ref{positivity-3})  we can find  positive constants $A$ and $B$ such that for every $n\in \mathbb{N}$ and $x\in X$,  
	                                                                                                                \[
	                                                                                                                A\varrho^{n}\leqslant   \mathscr{L}^n_{\mathsf{M}, f} \mathbf{1}(x) \leqslant \varrho^{n}B.
	                                                                                                                \]

	                                                                                                                Now observe that (\ref{positivity-1}) also implies that the family $\{\mathscr{L}^n_{\mathsf{M}, f}\mathbf{1}\}_{n\in \mathbb{N}}$ is an equicontinuous sequence. Define   for each $n\in \mathbb{N}$ the function $h_n\in C^{\omega}(X, \mathbb{R})$ by putting, 
	                                                                                                                \[
	                                                                                                                h_n(x)=\dfrac{1}{n}\sum_{j=0}^n \varrho^{-j}\mathscr{L}^j_{\mathsf{M}, f} \mathbf{1}(x).
	                                                                                                                \]
	                                                                                                                It is immediate from the definition that $A\leqslant h_n\leqslant B$ and  $\{h_n\}_{n\in \mathbb{N}}$ is an equicontinuous sequence. By the Arzelà-Ascoli theorem we can assume that there exists an $h\in  C^{\omega}(X, \mathbb{R})$ such that $h_n$ converges uniformly  to $h$ on the  compact subsets of $X$. Hence for every $x\in X$,  
	                                                                                                                \begin{multline}
	                                                                                                                |\mathscr{L}_{\mathsf{M},f} h(x)- \rho h(x)|= \lim_{n\to \infty} | \mathscr{L}_{\mathsf{M},f} h_n(x)-\rho h_n(x)|
	                                                                                                                \\
	                                                                                                                \leqslant \dfrac{\varrho}{n}|1-\varrho^{-n}\mathscr{L}_{\mathsf{M},f} \mathbf{1}(x)|\leqslant \dfrac{\varrho}{n}(1+B)=0.
	                                                                                                                \end{multline}
	                                                                                                                Therefore, $\mathscr{L}_{\mathsf{M},f} h= \varrho h$, and obviously $h\geqslant A>0$.
	                                                                                                                
	                                                                                                                As a consequence we can state the following adaptation of Theorem \ref{Kloeckner-main-result}.

	                                                                                                                \begin{theorem}\label{Kloeckner-main-result-adapted}
	                                                                                                                	The statement of  Theorems \ref{Kloeckner-main-result} and \ref{uncountable-version} still valid if instead a compact metric space we consider $X$  being a standard Borel space equipped with a bounded metric.
	                                                                                                                \end{theorem}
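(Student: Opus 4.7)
The plan is to carefully track the two places where compactness enters the proofs of Theorem \ref{Kloeckner-main-result} (Theorem 4.1 of \cite{kloeckner_2020}) and Theorem \ref{uncountable-version}, and supply replacement arguments that work in the non-compact but bounded-metric setting. As the discussion preceding the statement makes explicit, compactness is invoked only (i) to guarantee that $\text{diam}(X) < \infty$, and (ii) to produce a strictly positive eigenfunction $h \in C^{\omega}(X,\mathbb{R})$ of $\mathscr{L}_{\mathsf{M}, f}$ corresponding to the spectral radius $\varrho$, from which a normalized conjugate $\bar f = f + \log h - \log h \circ \sigma - \log \varrho$ is built; once these two items are secured, the optimal-transport machinery in the rest of \cite{kloeckner_2020} is insensitive to whether $X$ is compact.

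For (i), I would simply replace the original metric $d$ on $X$ with $\tilde d = \min\{1, d\}$. This metric is topologically equivalent to $d$, so the Borel structure is unchanged and $(X,\tilde d)$ remains a standard Borel space, now of diameter at most $1$. One checks that the natural coupling (\ref{natural-coupling}), the contraction estimate $d(x_n,y_n) \leqslant 2^{-n}d(x,y)$, and the flatness inequality continue to hold verbatim with $\tilde d$ in place of $d$, since on small scales the two metrics agree and the $\omega$-Hölder conditions depend only on that behavior (recall $\omega(0)=0$).

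For (ii), the construction has essentially been carried out in the discussion above the statement. Flatness of $f$ gives the two-sided bound $\tfrac{1}{C'}\mathscr{L}^n\mathbf{1}(y)\leqslant \mathscr{L}^n\mathbf{1}(x)\leqslant C'\mathscr{L}^n\mathbf{1}(y)$, which implies that $\varrho = \limsup_n(\mathscr{L}^n\mathbf{1}(x))^{1/n}$ is finite, nonzero, and independent of $x$, and that $\varrho^{-n}\mathscr{L}^n\mathbf{1}$ is uniformly sandwiched between two positive constants. The same estimate together with flatness also yields equicontinuity of $\{\varrho^{-n}\mathscr{L}^n\mathbf{1}\}_n$ in $C^{\omega}(X,\mathbb{R})$. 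Applying the general Arzelà–Ascoli theorem (\cite[Theorem 6.4]{Dugundji}) to the Cesàro averages $h_n = \tfrac{1}{n}\sum_{j=0}^n \varrho^{-j}\mathscr{L}^j\mathbf{1}$ produces, along a subsequence, a limit $h\in C^{\omega}(X,\mathbb{R})$ with $h\geqslant A > 0$, and a direct telescoping computation gives $\mathscr{L}_{\mathsf{M}, f} h = \varrho h$.

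With these two ingredients in place, the remainder of the arguments from \cite{kloeckner_2020,MR4637163} transfers unchanged: Kantorovich duality, the Wasserstein contraction of $\mathscr{L}^{*}_{\mathsf{M}, \bar f}$, and the decay-of-correlations argument underlying Theorem 5.8 rely solely on optimal-transport estimates and on Lemma \ref{Lemma-flat-1}-type flatness propagation, none of which uses compactness beyond boundedness of the metric. In my view the main obstacle is verifying that the limit $h$ extracted by Arzelà–Ascoli is genuinely $\omega$-Hölder globally (not merely on compacts) and that $\log h$ preserves flatness of the normalized potential $\bar f$ up to a quantitative change of constants; both follow from the distortion estimate (\ref{positivity-1}) combined with flatness of $f$, but they warrant careful bookkeeping rather than being simply invoked.
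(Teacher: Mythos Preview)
Your proposal is correct and follows essentially the same approach as the paper: the authors likewise isolate the two uses of compactness (finite diameter and existence of a positive eigenfunction), handle the first by passing to the bounded metric $\tilde d=\min\{1,d\}$, and handle the second via the flatness-based distortion estimate, equicontinuity, and the general Arzel\`a--Ascoli theorem applied to the Ces\`aro averages $h_n$, exactly as you outline. Your additional remarks about verifying that $h$ is globally $\omega$-H\"older and that $\bar f$ remains flat are reasonable points of care but do not depart from the paper's argument.
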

	                                                                                                         \begin{remark}
Theorem \ref{Kloeckner-main-result-adapted} is not the first Ruelle theorem to address non-compact and uncountable state spaces. In \cite{CSS19}, the authors also presented a Ruelle theorem for Hölder observables, using Optimal Transport techniques. The main distinction between Theorem \ref{Kloeckner-main-result-adapted} and the one described in \cite{CSS19} lies in the consideration of decay functions and moduli of continuity, which is in turn a non-trivial improvement.
   \end{remark}       
	                                                                                                           \begin{example}
Let $X=[0,\infty)^\mathbb{N}$	  and                                                                                                       	consider the Dyson type potential $f:X\to \mathbb{R}$ given by
 \begin{equation}\label{Dyson-non-comp-0}
 f(x)=\dfrac{x_1}{1+x_1}\sum_{n=1}^{\infty}J(n) \dfrac{x_n}{1+x_n}.
 \end{equation}
where  $J(n)$  is chosen in order to ensure convergence of the above series.

We claim that $f\in C^{\varepsilon \log}(X, \mathbb{R})$. Indeed, it is sufficient to prove that the map given by  
$\pi(x)=x_i/(1+x_i)$  belongs to $C^{\varepsilon \log}(X, \mathbb{R})$, the result will follows from the fact that $C^{\varepsilon \log}(X, \mathbb{R})$ is a Banach algebra.

Firstly, 
\begin{align}\label{Dyson-non-comp}
|\pi(x)-\pi(y)|=\left| \dfrac{x_i}{1+x_i}- \dfrac{y_i}{1+y_i}\right|=
\dfrac{|x_i-y_i|}{(1+x_i)(1+y_i)}. 
\end{align}
Now observe that for each $i\in \mathbb{N}$ we have that,
\begin{equation}\label{Dyson-non-comp-2}
d(x,y)\geqslant \dfrac{1}{2^i}\min\{|x_i-y_i|,1\}
\end{equation}
The case $x_i=y_i$ is trivial, suppose $x_i\neq y_i$. Dividing (\ref{Dyson-non-comp}) by (\ref{Dyson-non-comp-2}) we get that
\begin{align*}
\dfrac{|\pi(y)-\pi(x)|}{d(x,y)} \leqslant 2^i\cdot \dfrac{|x_i-y_i|}{(1+x_i)(1+y_i)\min\{|x_i-y_i|,1\}}.
\end{align*}
By observing  that $i$ is fixed,  and considering the cases $|x_i-y_i|<1$ and $|x_i-y_i|\geqslant 1$ one can easily see that  the fraction on the  right-hand side above  is bounded for any $x_i$ and  $y_i$ belonging in $ [0, \infty).$ Remembering that  for  $t\in [0,1]$ we have that, $t\leqslant D \omega_{\varepsilon}(t)$, it follows that there must exists a constant $C$ such that 
\[
|\pi(x)-\pi(y)|\leqslant C \omega_{\varepsilon}(d(x,y)),
\]
thus concluding that   $f\in C^{\varepsilon \log}(X, \mathbb{R})$.

Now suppose $J(n)=1/n^{2+\varepsilon}$. 
 By using Lemma \ref{Lemma-flat-1} we get, for $\varepsilon>2$  that $f$ is flat with respect to $\omega_{\varepsilon-1}$. 
Now consider the transition probability kernel $\mathsf{M}=(\mu \times \delta_x)_{x\in X}$ and $\mathsf{P}=(\Pi_{x,y}^n)_{x,y,n}$ the natural coupling  defined by
\begin{equation}
\Pi^n_{x,y}=\int_{E^n}\delta_{(\bar{x}^{w}_n , \bar{y}^{w}_n)}d\mu^n(w).
\end{equation}
Theorem \ref{decay-rate} shows that $\mathsf{P}_f$ has polynomial decay rate of degree $\varepsilon$ with respect to  $\omega_{\varepsilon}(r)=\log(r_0/r)^{-\varepsilon}$.

Then  we can apply Theorem \ref{Kloeckner-main-result-adapted}  and obtain that $\mathscr{L}_{\mathsf{M}, f}$ has polynomial decay of correlations. By following the same approach adopted in Section \ref{O(N)-model} we have that, for each $\varphi \in C^{\varepsilon \log}(X, \mathbb{R})$ satisfying  $\int\varphi\, d\mu_f=0$, the potential $v=-\sum_{n=0}^\infty \mathscr{L}_{\mathsf{M}, \bar{f}}\varphi$ is a solution, thus implying the validity of a FCLT for this model.
\end{example}

\section*{Acknowledgments}
 The authors  thanks to Leandro Ciolleti and Ali Messaoudi for fruitful discussions and suggestions.

\bibliographystyle{alpha}
\bibliography{referencias}
%
\end{document}